\documentclass[10pt]{article}
\usepackage{amsmath,amssymb,amsthm}
\usepackage{graphicx,color}
\usepackage[margin=1in]{geometry}
\usepackage[affil-it]{authblk}

\newtheorem{proposition}{Proposition}[section]

\theoremstyle{definition}

\theoremstyle{remark}
\newtheorem{Remark}{Remark}

\newcommand{\m}{\scalebox{0.75}[1.0]{$-$}}

\newmuskip\pFqskip
\pFqskip=6mu
\mathchardef\pFcomma=\mathcode`, 

\newcommand*\pFq[5]{%
  \begingroup
  \begingroup\lccode`~=`,
    \lowercase{\endgroup\def~}{\pFcomma\mkern\pFqskip}%
  \mathcode`,=\string"8000
  {}_{#1}F_{#2}\biggl[\genfrac..{0pt}{}{#3}{#4};#5\biggr]%
  \endgroup
}


\numberwithin{equation}{section}
\allowdisplaybreaks

\title{Convolution identities for Dunkl orthogonal polynomials \\ from the $\mathfrak{osp}(1|2)$ Lie superalgebra}
\author[1]{Erik Koelink}
\author[2]{Jean-Michel Lemay}
\author[2]{Luc Vinet}
\affil[1]{IMAPP, Radboud Universiteit, P.O. Box 9010, 6500 GL Nijmegen, The Netherlands}
\affil[2]{Centre de Recherches Math\'ematiques, Universit\'e de Montr\'eal, C.P. 6128,\protect\\ Succ. Centre-ville, Montr\'eal, QC, Canada, H3C 3J7} 
\date{}

\begin{document}
\maketitle

\begin{abstract}
 
 \noindent New convolution identities for orthogonal polynomials belonging to the $q=-1$ analog of the Askey-scheme are obtained. A specialization of the Chihara polynomials will play a central role as the eigenfunctions of a special element of the Lie superalgebra $\mathfrak{osp}(1|2)$ in the positive discrete series representation. Using the Clebsch-Gordan coefficients, a convolution identity for the Specialized Chihara, the dual \m1 Hahn and the Big \m1 Jacobi polynomials is found. Using the Racah coefficients, a convolution identity for the Big \m1 Jacobi and the Bannai-Ito polynomials is found. Finally, these results are applied to construct a bilinear generating function for the Big \m1 Jacobi polynomials.   
 
\end{abstract}

\section{Introduction}

In \cite{Granovskii1993}, Granovskii and Zhedanov proposed an approach to obtain convolution identities for orthogonal polynomials of the Askey-scheme through algebraic methods. The main idea is to study a self-adjoint element of a Lie algebra which corresponds to a recurrence operator diagonalized by orthogonal polynomials in a suitable representation. In the tensor product of representations, the Clebsch-Gordan decomposition and the Racah recoupling can then be used to relate polynomial eigenfunctions in two different bases to arrive at convolution identities. Van der Jeugt \cite{VanDerJeugt1997} expanded on this idea to obtain generalizations of some classical convolution identities for the Laguerre and Hermite polynomials. One of the authors then joined Van der Jeugt \cite{Koelink1998} to exploit this approach further and obtain convolution identities for the Meixner-Pollaczek, the Hahn and the Jacobi polynomials and their descendants with $\mathfrak{su}(1,1)$ as the underlying Lie algebra and also for the Al-Salam Chihara, $q$-Racah and Askey-Wilson polynomials using the quantized analog $U_q(\mathfrak{su}(1,1))$. Two subsequent papers \cite{VanderJeugt1998,Koelink1999} extended this work and derived generating functions and Poisson kernels for some involved polynomials by using differential realizations of the discrete series representations of $\mathfrak{su}(1,1)$ and its $q$-generalization.     

The main goal of this paper is to use this construction to obtain convolution identities for orthogonal polynomials belonging to the Bannai-Ito scheme of \m1 orthogonal polynomials \cite{Bannai1984,Vinet2011,Tsujimoto2012,Vinet2012, Genest2013a,Tsujimoto2013,Genest2014b}. These polynomials arise as the $q=-1$ limits of families belonging to the Askey tableau of $q$-orthogonal polynomials \cite{Koekoek2010}. More precisely, most of its polynomials are defined by $q\to-1$ limits of the Askey-Wilson polynomials and its descendants. The \m1 orthogonal polynomials are eigenfunctions of Dunkl operators \cite{Vinet2011a} which involve the reflexion operator $R$ defined by $R f(x)=f(-x)$ \cite{Dunkl1988,Dunkl2006}. For this reason, they are also called Dunkl orthogonal polynomials. The first example of such polynomials was introduced by Bannai and Ito as a $q\to-1$ limit of the $q$-Racah polynomials in the classification of a category of association scheme \cite{Bannai1984}. They have since been fully characterized \cite{Tsujimoto2012} and have appeared  in various context : superintegrable systems \cite{Genest2013,Genest2014,Genest2015a} and the transport of quantum information \cite{Vinet2012a,Vinet2012b,Tsujimoto2013} for example. The Lie superalgebra $\mathfrak{osp}(1|2)$, sometimes referred to as $sl_{-1}(2)$, has been found to provide a fruitful algebraic underpinning for a number of \m1 polynomials \cite{Tsujimoto2011,Bergeron2016,Vinet2017}. In particular, the Clebsch-Gordan coefficients of $\mathfrak{osp}(1|2)$ can be expressed in terms of dual $-1$ Hahn polynomials \cite{Genest2013b} and the Racah coefficients in terms of Bannai-Ito polynomials \cite{Genest2014a}. These two results are essential ingredients of the main results of this paper : the convolution identities given in propositions \ref{convid1} and \ref{convid2}. The former relates the Specialized Chihara, the dual \m1 Hahn and the Big \m1 Jacobi polynomials and the latter connects the Big \m1 Jacobi and the Bannai-Ito polynomials. These results can also be interpreted in another interesting way. They give connection coefficients for different two-variables polynomials orthogonal with respect to the same measure. This is an interesting feature as the extension to multiple variables of the Bannai-Ito scheme is in its early stages \cite{Genest2015,Lemay2018,DeBie2019}. It is also quite remarkable to have a framework relating so many Dunkl orthogonal polynomials. As additional results, we obtain a generating function for the Specialized Chihara polynomials and follow an approach similar to the one used in \cite{VanderJeugt1998} to obtain a bilinear generating function for the Big \m1 Jacobi polynomials. The discussion in section 5 indicates that the results of \cite{Koelink1999} can be generalized to Lie superalgebra representations; the complexity of the outcome will however increase considerably. 

The paper is structured as follows. The properties of the relevant \m1 orthogonal polynomials are surveyed in section 2 and section 3 is dedicated to a review of the superalgebra $\mathfrak{osp}(1|2)$ and its Clebsch-Gordan and Racah coefficients. We proceed in section 4 to the construction of convolution identities. A self-adjoint element of $\mathfrak{osp}(1|2)$ is introduced and its generalized eigenvectors in the positive discrete series representation are obtained. Looking at the tensor product of irreducible representations, the Clebsch-Gordan coefficients are used to construct a first convolution identity. Next, the three-fold tensor product and the Racah coefficients are considered to obtain a second convolution identity. In section 5, we present a first application of these results and derive a bilinear generating function for the Big \m1 Jacobi polynomials. Finally, some closing remarks are given in the conclusion.

\section{Review of Dunkl orthogononal polynomials}

  The results we present in this paper have the notable feature of connecting various orthogonal polynomials from the Bannai-Ito scheme together. The families involved are the Specialized Chihara polynomials $P_n(\lambda;\mu,\gamma)$, the Big \m1 Jacobi polynomials $J_{n}(x;a,b,c)$, the dual \m1 Hahn polynomials $R_n(x;\eta,\xi,N)$ and finally the Bannai-Ito polynomials $B_n(x;\rho_1,\rho_2,r_1,r_2)$. We review in this section some of their properties while etablishing the notation that will be used throughout this paper.     

  \subsection{Specialized Chihara polynomials}
    A one-parameter extension of the generalized Hermite polynomials was introduced in \cite{Genest2014b} as a special case of the Chihara polynomials which both sit in the $q=-1$ analog of the Askey scheme. We present some of their properties here with a different normalization and a different notation. For simplicity, we name them the Specialized Chihara polynomials and denote them by $P_n(\lambda;\mu,\gamma)=P_n(\lambda)$. They satisfy the 3-term recurrence relation
    \begin{align} \label{OpeghRR}
    \lambda P_n(\lambda) = [n+1]_\mu^{1/2} P_{n+1}(\lambda) + \gamma(-1)^n P_{n}(\lambda) + [n]_\mu^{1/2} P_{n-1}(\lambda)
    \end{align}
    where $[n]_\mu = n +(1-(-1)^n)\mu$ denotes the $\mu$-number. The Specialized Chihara polynomials can be expressed in terms of Laguerre polynomials in the following way :
    \begin{align} 
    \begin{aligned} \label{ExplicitOpegh}
    P_{2n}(\lambda;\mu,\gamma) &=  (-1)^n \sqrt{\frac{n!\ \Gamma(\mu+\tfrac12)}{\Gamma(n+\mu+\tfrac12)}}\ L_n^{(\mu-\frac12)}\left( \frac{\lambda^2-\gamma^2}{2} \right), \\
    P_{2n+1}(\lambda;\mu,\gamma) &=  (-1)^n \sqrt{\frac{n!\ \Gamma(\mu+\tfrac32)}{\Gamma(n+\mu+\tfrac32)}} \left(\frac{\lambda-\gamma}{\sqrt{2\mu+1}}\right) L_n^{(\mu+\frac12)}\left( \frac{\lambda^2-\gamma^2}{2} \right) ,
    \end{aligned}
    \end{align}
    where the Laguerre polynomials are given in terms of the usual hypergeometric function
    \begin{align} 
    L_n^{(\alpha)}(x) = \frac{(\alpha+1)_n}{n!}\ \pFq{1}{1}{-n}{\alpha+1}{x}
    \end{align}
    with $(a)_n = a(a+1)\dots(a+n-1)$ denoting the Pochhammer symbol. The Specialized Chihara polynomials satisfy the orthogonality relation
    \begin{align} \label{OpeghOR}
    \int_{F} P_n(\lambda)P_m(\lambda) w(\lambda,\mu,\gamma)d\lambda = 2\Gamma(\mu+\tfrac12)\delta_{n,m}
    \end{align}
    where $F=  (-\infty,-|\gamma|~) \cup (~|\gamma|,\infty)$ and the weight function is given by
    \begin{align} \label{OpeghW}
    w(\lambda,\mu,\gamma) = \text{sign}(\lambda)(\lambda+\gamma)\left(\frac{\lambda^2-\gamma^2}{2}\right)^{\mu-\frac12}e^{-\left(\frac{\lambda^2-\gamma^2}{2}\right)}.
    \end{align}
    This result can easily be verified from the orthogonality of the Chihara polynomials or from that of the Laguerre polynomials. They satisfy also a specialization of the differential-difference equation obeyed by the Chihara polynomials.

  \subsection{The Big -1 Jacobi polynomials}
    We now review some of the properties of the Big \m1 Jacobi polynomials which shall be needed in the following. Denoted by $J_{n}(x;a,b,c)$, these polynomials are also part of the $q=-1$ analog of the Askey scheme and were introduced in \cite{Vinet2012} as a $q\to-1$ limit of the Big $q$-Jacobi polynomials. They are defined by
    \begin{align}
    \label{Jacobi}
    J_{n}(x;a,b,c)=
    \begin{cases}
    \pFq{2}{1}{-\frac{n}{2},\frac{n+a+b+2}{2}}{\frac{a+1}{2}}{\frac{1-x^2}{1-c^2}}+\frac{n(1-x)}{(1+c)(a+1)}\; \pFq{2}{1}{1-\frac{n}{2}, \frac{n+a+b+2}{2}}{\frac{a+3}{2}}{\frac{1-x^2}{1-c^2}}, & \text{$n$ even},
    \\[3mm]
    \pFq{2}{1}{-\frac{n-1}{2}, \frac{n+a+b+1}{2}}{\frac{a+1}{2}}{\frac{1-x^2}{1-c^2}}-\frac{(n+a+b+1)(1-x)}{(1+c)(a+1)} \; \pFq{2}{1}{-\frac{n-1}{2}, \frac{n+a+b+3}{2}}{\frac{a+3}{2}}{\frac{1-x^2}{1-c^2}},& \text{$n$ odd},
    \end{cases}
    \end{align}
    where ${}_{2}F_{1}$ is the standard Gauss hypergeometric function. We shall simply write $J_{n}(x)$ instead of $J_{n}(x;a,b,c)$ when the parameters are clear from the context. They satisfy the recurrence relation
    \begin{align*}
      x\,J_{n}(x)=A_{n}\, J_{n+1}(x)+(1-A_{n}-C_{n})\,J_{n}(x)+ C_{n}\,J_{n-1}(x),
    \end{align*}
    with coefficients
    \begin{align*}
    A_{n}=
      \begin{cases}
      \frac{(n+a+1)(c+1)}{2n+a+b+2}, & \text{$n$ even},\\[0.5em]
      \frac{(1-c)(n+a+b+1)}{2n+a+b+2}, & \text{$n$ odd},
      \end{cases}
    \qquad 
    C_{n}=
      \begin{cases}
      \frac{n(1-c)}{2n+a+b}, & \text{$n$ even},\\[0.5em]
      \frac{(n+b)(1+c)}{2n+a+b}, & \text{$n$ odd}.
      \end{cases}
    \end{align*}
    It can be seen that for $a,b>-1$ and $|c|\neq 1$ the polynomials $J_{n}(x)$ are positive-definite. The orthogonality relation of the Big \m1 Jacobi polynomials is different for $|c|<1$ and $|c|>1$. In what follows, we only need the polynomials for the first case. For $|c|<1$, one has 
    \begin{align}
      \label{Ortho-1}
      \int_{\mathcal{C}}J_{n}(x;a,b,c)\,J_{m}(x;a,b,c) \;\omega(x;a,b,c)\;\mathrm{d}x=\left[\frac{(1-c^2)^{\frac{a+b+2}{2}}}{(1+c)}\right] h_{n}(a,b)\,\delta_{nm},
    \end{align}
    where the interval is $\mathcal{C}=(-1,-|c|~)\cup(~|c|,1)$ and the weight function reads
    \begin{align}
      \label{Weight-1}
      \omega(x;a,b,c)=\text{sign}(x)\,(1+x)\,(x-c)\,(x^2-c^2)^{\frac{b-1}{2}}\,(1-x^2)^{\frac{a-1}{2}}.
    \end{align}
    The normalization factor $h_{n}$ is given by
    \begin{align}
    \label{hn}
    h_{n}(a,b)=
      \begin{cases}
      \frac{2\;\Gamma\left(\frac{n+b+1}{2}\right)\Gamma\left(\frac{n+a+3}{2}\right) \left(\frac{n}{2}\right)!}{(n+a+1)\;\Gamma\left(\frac{n+a+b+2}{2}\right)\left(\frac{a+1}{2}\right)_{\frac{n}{2}}^2}, & \text{$n$ even}, \\[3mm]
      \frac{(n+a+b+1)\;\Gamma\left(\frac{n+b+2}{2}\right)\Gamma \left(\frac{n+a+2}{2}\right) \left(\frac{n-1}{2}\right)!}{2\,\Gamma\left(\frac{n+a+b+3}{2}\right) \left(\frac{a+1}{2}\right)_{\frac{n+1}{2}}^2}, & \text{$n$ odd}.
      \end{cases}
    \end{align}
    The orthogonality relation for $|c|>1$ and a difference equation can be found in \cite{Vinet2012}.

  \subsection{The dual -1 Hahn polynomials}
    We now introduce a third family of orthogonal polynomials. The dual \m1 Hahn polynomials, denoted by $R_n(x;\eta,\xi,N)$ or $R_n(x)$, depends on two real parameters $\eta, \xi$ and on an integer parameter $N$. They have been introduced in \cite{Tsujimoto2013} as a $q\to-1$ limit of the dual $q$-Hahn polynomials. They have found applications in the transport of quantum information along spin chains \cite{Vinet2012a} and, of importance here, they have also been shown to arise as the Clebsch-Gordan coefficients of the Lie superalgebra $\mathfrak{osp}(1|2)$ \cite{Genest2013b}. They satisfy the 3-term recurrence relation 
    \begin{align}
    x R_n(x) = R_{n+1}(x) + \big((-1)^{n+1} (2\xi +(-1)^N 2\eta)-1\big) R_n(x) + 4[n]_\xi [N-n+1]_\eta R_{n-1}(x).
    \end{align}
    They can be expressed as follows in terms of hypergeometric series. For $N$ even, we have
    \begin{align}
    R_n(x;\eta,\xi,N) =
    \begin{cases}
    16^{\frac{n}{2}} \left(-\tfrac{N}{2}\right)_{\!\frac{n}{2}} \left(\tfrac{1-2\eta-N}{2}\right)_{\!\frac{n}{2}} \pFq{3}{2}{-\tfrac{n}{2},\delta+\tfrac{x+1}{4},\delta-\tfrac{x+1}{4}}{-\tfrac{N}{2},\tfrac{1-2\eta-N}{2}}{1}, \quad &\!\!\text{$n$ even,} \\[1em]
    16^{\frac{n-1}{2}} \left(1\!-\!\tfrac{N}{2}\right)_{\!\frac{n-1}{2}} \left(\tfrac{1-2\eta-N}{2}\right)_{\!\frac{n-1}{2}}(x+2\eta+2\xi+1)~ \pFq{3}{2}{-\tfrac{n-1}{2},\delta+\tfrac{x+1}{4},\delta-\tfrac{x+1}{4}}{1-\tfrac{N}{2},\tfrac{1-2\eta-N}{2}}{1}, &\text{$n$ odd,}
    \end{cases}
    \end{align}
    where $\delta = -\tfrac{\eta +\xi +N}{2}$ and, for $N$ odd, we have
    \begin{align}
    R_n(x;\eta,\xi,N) =
    \begin{cases}
    16^{\frac{n}{2}} \left(\tfrac{1-N}{2}\right)_{\!\frac{n}{2}} \left(\tfrac{2\xi+1}{2}\right)_{\!\frac{n}{2}} \pFq{3}{2}{-\tfrac{n}{2},\delta+\tfrac{x+1}{4},\delta-\tfrac{x+1}{4}}{\tfrac{1-N}{2},\tfrac{2\xi+1}{2}}{1}, \quad &\text{$n$ even,} \\[1em]
    16^{\frac{n-1}{2}} \left(\tfrac{1-N}{2}\right)_{\!\frac{n-1}{2}} \left(\tfrac{2\xi+3}{2}\right)_{\!\frac{n-1}{2}}(x+2\xi-2\eta+1)~ \pFq{3}{2}{-\tfrac{n-1}{2},\delta+\tfrac{x+1}{4},\delta-\tfrac{x+1}{4}}{\tfrac{1-N}{2},\tfrac{2\xi+3}{2}}{1}, \quad &\text{$n$ odd,}
    \end{cases}
    \end{align}
    where $\delta = \tfrac{\eta +\xi +1}{2}$. The dual \m1 Hahn polynomials are orthogonal with respect to the discrete measure 
    \begin{align}
    \sum_{s=0}^N \varpi_s(\eta,\xi,N) R_n(y_s;\eta,\xi,N)R_m(y_s,\eta,\xi,N) = \nu_n(\eta,\xi,N) \delta_{n,m}
    \end{align}
    with the following grid points : 
    \begin{align}
    y_s = 
    \begin{cases}
    (-1)^s(2s-2\eta-2\xi-2N-1), \quad &\text{$N$ even,} \\[0.5em]     
    (-1)^s(2s+2\eta+2\xi+1),    \quad &\text{$N$ odd.}        
    \end{cases}
    \end{align}
    For $N$ even, the weights and normalization factors are given by
    \begin{align}
    \varpi_s(\eta,\xi,N) = 
    \begin{cases}
      (-1)^{\frac{s}{2}}\dfrac{\left(-\frac{N}{2}\right)_{\frac{s}{2}} \left(-\frac{N}{2}-\eta +\frac{1}{2}\right)_{\frac{s}{2}} (-N-\eta -\xi )_{\frac{s}{2}}}{\frac{s}{2}! \left(-\frac{N}{2}-\xi +\frac{1}{2}\right)_{\frac{s}{2}} \left(-\frac{N}{2}-\eta -\xi \right)_{\frac{s}{2}}}, \quad &\text{$s$ even,} \\[1.5em]   
      (-1)^{\frac{s-1}{2}}\dfrac{ \left(-\frac{N}{2}\right)_{\frac{s+1}{2}} \left(-\frac{N}{2}-\eta +\frac{1}{2}\right)_{\frac{s-1}{2}} (-N-\eta -\xi )_{\frac{s-1}{2}}}{\frac{s-1}{2}! \left(-\frac{N}{2}-\xi +\frac{1}{2}\right)_{\frac{s-1}{2}} \left(-\frac{N}{2}-\eta -\xi \right)_{\frac{s+1}{2}}}, \quad &\text{$s$ odd,}
    \end{cases}
    \end{align}
    \begin{align}
    \nu_n(\eta,\xi,N) =
    \begin{cases}
      16^n~ \frac{n}{2}! \left(-\frac{N}{2}\right)_{\frac{n}{2}} \left(\xi +\frac{1}{2}\right)_{\frac{n}{2}} \left(\frac{-N-2 \eta +1}{2}\right)_{\frac{n}{2}} \frac{(-N-\eta -\xi )_{\frac{N}{2}}}{\left(\frac{-N-2 \xi +1}{2}\right)_{\frac{N}{2}}}, \quad &\text{$n$ even,} \\[1.5em]
      -16^n~ \frac{n-1}{2}! \left(-\frac{N}{2}\right)_{\frac{n+1}{2}} \left(\xi +\frac{1}{2}\right)_{\frac{n+1}{2}} \left(\frac{-N-2 \eta +1}{2}\right)_{\frac{n-1}{2}}\frac{(-N-\eta -\xi )_{\frac{N}{2}}}{\left(\frac{-N-2 \xi +1}{2}\right)_{\frac{N}{2}}}, \quad &\text{$n$ odd,} 
    \end{cases}
    \end{align}
    whileas for $N$ odd, the weights and normalization factors are given by
    \begin{align}
    \varpi_s(\eta,\xi,N) = 
    \begin{cases}
      (-1)^{\frac{s}{2}}\dfrac{\left(\frac{1-N}{2}\right)_{\frac{s}{2}} \left(\xi +\frac{1}{2}\right)_{\frac{s}{2}} (\eta +\xi +1)_{\frac{s}{2}}}{\frac{s}{2}! \left(\eta +\frac{1}{2}\right)_{\frac{s}{2}} \left(\frac{N+3}{2}+\eta +\xi\right)_{\frac{s}{2}}}, \quad &\text{$s$ even,} \\[1.5em]   
      (-1)^{\frac{s-1}{2}}\dfrac{\left(\frac{1-N}{2}\right)_{\frac{s-1}{2}} \left(\xi +\frac{1}{2}\right)_{\frac{s+1}{2}} (\eta +\xi +1)_{\frac{s-1}{2}}}{\frac{s-1}{2}! \left(\eta +\frac{1}{2}\right)_{\frac{s+1}{2}} \left(\frac{N+3}{2}+\eta +\xi\right)_{\frac{s-1}{2}}}, \quad &\text{$s$ odd,}
    \end{cases}
    \end{align}
    \begin{align}
    \nu_n(\eta,\xi,N) =
    \begin{cases}
      16^n~ \frac{n}{2}! \left(\frac{1-N}{2}\right)_{\frac{n}{2}} \left(\xi +\frac{1}{2}\right)_{\frac{n}{2}} \left(-\frac{N}{2}-\eta \right)_{\frac{n}{2}}\frac{(\eta +\xi +1)_{\frac{N+1}{2}}}{\left(\eta +\frac12\right)_{\frac{N+1}{2}}}, \quad &\text{$n$ even,} \\[1.5em]
      -16^n~ \frac{n-1}{2}! \left(\frac{1-N}{2}\right)_{\frac{n-1}{2}} \left(\xi +\frac{1}{2}\right)_{\frac{n+1}{2}} \left(-\frac{N}{2}-\eta \right)_{\frac{n+1}{2}}\frac{(\eta +\xi +1)_{\frac{N+1}{2}}}{\left(\eta +\frac12\right)_{\frac{N+1}{2}}}, \quad &\text{$n$ odd.} 
    \end{cases}
    \end{align}
    For future convenience, we introduce 
    \begin{align}
    z_s = (-1)^{s+N+1} (2s+2\eta+2\xi+1), 
    \qquad 
    \rho_s(\eta,\xi,N) = 
    \begin{cases} 
      \varpi_{N-s}(\eta,\xi,N), \quad &\text{$N$ even,}\\
      \varpi_{s}(\eta,\xi,N), \quad &\text{$N$ odd}
    \end{cases}
    \end{align}
    which corresponds to the grid points and weights of the dual \m1 Hahn polynomials with the indices reversed when $N$ is even but not when $N$ is odd.

  \subsection{The Bannai-Ito polynomials}
    We finally present a last family of orthogonal polynomials called the Bannai-Ito polynomials. They were originally discovered by Bannai and Ito \cite{Bannai1984} in their classification of $P-$ and $Q-$ polynomials association scheme which are in correspondance with orthogonal polynomials satisfying the Leonard duality property. In this original setting, they were observed to be $q\to-1$ limit of the $q$-Racah polynomials. They have also been shown to correspond to a $q\to-1$ limit of the Askey-Wilson polynomials \cite{Tsujimoto2012}. The Bannai-Ito polynomials occur in a Bochner-type theorem for first order Dunkl difference operators \cite{Tsujimoto2012} that has them at the top of the $q=-1$ analog of the $q$-Askey tableau. Of  particular relevance to this paper, they are the Racah coefficients for the Lie superalgebra $\mathfrak{osp}(1|2)$ \cite{Genest2014a}.

    The monic Bannai-Ito polynomials $B_n(x;\rho_1,\rho_2,r_1,r_2)$, or $B_n(x)$ for short, depend on 4 parameters $\rho_1,\rho_2,r_1,r_2$ and the linear combination
    \begin{align}
    g=\rho_1+\rho_2-r_1-r_2.
    \end{align}
    They are symmetric with respect to the $\mathbb{Z}_2 \times \mathbb{Z}_2$ group transformations generated by $\rho_1 \leftrightarrow \rho_2$ and $r_1 \leftrightarrow r_2$. 
    Throughout this section, it will be convenient to write integers as follows
    \begin{align} \label{paritynotation}
    n=2n_e+n_p, \qquad n_p\in\{0,1\}, \quad n, n_e\in\mathbb{N}.           
    \end{align}
    The Bannai-Ito polynomials can be defined in terms of two hypergeometric functions 
    \begin{align} \label{hypergeo-expr}
    \frac{1}{\eta_n}B_n(x;\rho_1,\rho_2,r_1,r_2) =\ &\pFq{4}{3}{\scriptstyle -n_e, n_e+g+1, x-r_1+\frac12, -x-r_1+\frac12}{\scriptstyle 1-r_1-r_2, \rho_1-r_1+\frac12, \rho_2-r_1+\frac12}{1} \\[0.5em]
    &+ \frac{(-1)^n(n_e\!+\!n_p\!+\!g n_p)(x\!-\!r_1\!+\!\tfrac12)}{(\rho_1-r_1+\tfrac12)(\rho_2-r_1+\tfrac12)} \pFq{4}{3}{\scriptstyle -n_e-n_p+1, n_e+n_p+g+1, x-r_1+\frac32, -x-r_1+\frac12}{\scriptstyle 1-r_1-r_2, \rho_1-r_1+\frac32, \rho_2-r_1+\frac32}{1} \notag
    \end{align}
    with the normalization coefficients
    \begin{align} \label{eta}
    \eta_n = (-1)^{n} \frac{(\rho_1-r_1+\tfrac12)_{n_e+n_p}(\rho_2-r_1+\tfrac12)_{n_e+n_p}(1-r_1-r_2)_{n_e}}{(n_e+g+1)_{n_e+n_p}}.
    \end{align}
    It is also possible to express the Bannai-Ito polynomials as a linear combination of two Wilson polynomials \cite{Tsujimoto2012}. 

    The $B_n(x)$ satisfy the 3-term recurrence relation
    \begin{gather}
    \label{Recurrence-Relation}
    xB_n(x) = B_{n+1}(x)+(\rho_1-A_n-C_n)B_{n}(x)+A_{n-1}C_{n}B_{n-1}(x),
    \end{gather}
    with the initial conditions $B_{-1}(x)=0$ and $B_{0}(x)=1$.
    The recurrence coef\/f\/icients $A_n$ and $C_n$ are given by
    \begin{gather}\label{Coeff-An}
    \begin{aligned}
    A_n&=
    \begin{cases}
    \dfrac{(n+2\rho_1-2r_1+1)(n+2\rho_1-2r_2+1)}{4(n+g+1)}, & n~\text{even},
    \vspace{1mm}\\
    \dfrac{(n+2g+1)(n+2\rho_1+2\rho_2+1)}{4(n+g+1)}, & n~\text{odd},
    \end{cases}
    \\[1em] 
    &C_n =
    \begin{cases}
    -\dfrac{n(n-2r_1-2r_2)}{4(n+g)}, & n~\text{even},
    \vspace{1mm}\\
    -\dfrac{(n+2\rho_2-2r_2)(n+2\rho_2-2r_1)}{4(n+g)}, & n~\text{odd}.
    \end{cases}
    \end{aligned}
    \end{gather}
    Favard's theorem states that these polynomials will be orthogonal only if they satisfy the positivity conditions $u_n = A_{n-1}C_n >0$. Since this cannot be achieved for all $n\in\mathbb{N}$, the parameters must verify a truncation condition of the form 
    \begin{align} \label{truncationcondition}
    u_0=u_{N+1}=0.
    \end{align}
    The integer $N$ is called the truncation parameter.

    If these conditions are fulf\/illed, the Bannai-Ito polynomials $B_{n}(x)$ satisfy the discrete
    orthogonality relation
    \begin{gather}
    \sum_{k=0}^{N}w_{k}B_{n}(x_k)B_{m}(x_k)=h_n\delta_{nm},
    \end{gather}
    with respect to a positive set of weights~$w_k$.
    The orthogonality grid ~$x_k$ corresponds to the simple roots of the characteristic polynomial $B_{N+1}(x)$.
    The explicit formulas for the weight function $w_{k}$ and the grid points~$x_k$ depend on the
    parity of $N$ and more explicitly on the realization of the truncation condition~$u_{N+1}=0$.

    If $N$ is even, it follows from~\eqref{Coeff-An} that the condition $u_{N+1}=0$ is tantamount to one of the following requirements associated to all possible values of $j$ and $\ell$ :
    \begin{align} \label{Tcond1}
    i)~ r_j - \rho_\ell = \frac{N+1}{2}, \qquad j,\ell \in\{1,2\}.
    \end{align}
    Note that the four possibilities coming from the choices of $j$ and $\ell$ are equivalent since the polynomials $B_n(x)$ are invariant under the exchanges $\rho_1\leftrightarrow\rho_2$ and $r_1\leftrightarrow r_2$.

    If $N$ is odd, it follows from~\eqref{Coeff-An} that the condition $u_{N+1}=0$ is equivalent to
    one of the following restrictions:
    \begin{gather} \label{Tcond2}
    ii)~\rho_1+\rho_2=-\frac{N+1}{2},
    \qquad
    iii)~r_1+r_2=\frac{N+1}{2},
    \qquad
    iv)~\rho_1+\rho_2-r_1-r_2=-\frac{N+1}{2}.
    \end{gather}
    In this paper, we shall only be concerned with the truncation conditions $r_2-\rho_1 = \frac{N+1}{2}$ when $N$ is even and $\rho_1+\rho_2=-\frac{N+1}{2}$ when $N$ is odd.
    In these cases, the grid points have the expression
    \begin{gather}
    \label{gridi}
    x_k=(-1)^{k}(k/2+\rho_1+1/4)-1/4,
    \end{gather}
    for $k=0,\dots,N$ and using \eqref{paritynotation} the weights take the form
    \begin{gather}
    \label{Ortho-Weight}
    w_{k}=\frac{(-1)^{k}}{k_e!}
    \frac{(\rho_1-r_1+1/2)_{k_e+k_p}(\rho_1-r_2+1/2)_{k_e+k_p}(\rho_1+\rho_2+1)_{k_e}(2\rho_1+1)_{k_e}}
    {(\rho_1+r_1+1/2)_{k_e+k_p}(\rho_1+r_2+1/2)_{k_e+k_p}(\rho_1-\rho_2+1)_{k_e}},
    \end{gather}
    where $(a)_{n}=a(a+1)\cdots(a+n-1)$ denotes the Pochhammer symbol. 
    When $N$ is even, the normalization factors are given by
    \begin{align} \label{Ortho-Norm}
    h_{n} = \frac{n_e! N_e! (1+2\rho_1)_{N_e} (1+\rho_1+\rho_2)_{n_e} (1+n_e+g)_{N_e-n_e} (\frac12+\rho_1-r_1)_{n_e+n_p} (\frac12+\rho_2-r_1)_{n_e+n_p} }{ (N_e-n_e-n_p)! (\frac12+\rho_1+r_1)_{N_e-n_e} (\frac12+n_e+n_p+\rho_2-r_2)_{N_e-n_e-n_p} (1+n+g)_{n_e+n_p}^2 }
    \end{align}
    and, for $N$ odd, they are instead given by
    \begin{align} \label{Ortho-Norm2}
    h_{n} = \frac{ n_e! N_e! (1+2\rho_1)_{N_e+1} (1\!-\!r_1\!-\!r_2)_{n_e} (1\!+\!n_e\!+\!g)_{N_e+1-n_e} (\frac12\!+\!\rho_1\!-\!r_1)_{n_e+n_p} (\frac12\!+\!\rho_1\!-\!r_2)_{n_e+n_p} }{ (N_e\!-\!n_e)! (\frac12+\rho_1+r_1)_{N_e+1-n_e-n_p} (\frac12+n_e+n_p+\rho_2-r_2)_{N_e+1-n_e-n_p} (1+n+g)_{n_e+n_p}^2 }.
    \end{align}

    The Bannai-Ito polynomials also verify a difference equation. It was shown in \cite{Tsujimoto2012} that in fact they diagonalize the most general first order Dunkl difference operator with orthogonal polynomials as eigenfunctions.

\section{The $\mathfrak{osp}(1|2)$ Lie Superalgebra}

    This section describes the key entity upon which our study rests namely the Lie superalgebra $\mathfrak{osp}(1|2)$.
    This superalgebra possesses one even generator $J_0$ and two odd generators $J_\pm$. The $\mathbb{Z}_2$-grading will be encoded with the help of an involution operator $R$. The defining relations are
    \begin{align}
    [J_0,J_\pm]=\pm J_\pm, \quad \{J_+,J_-\}=2J_0, \quad [R,J_0]=\{R,J_\pm\}=0, \quad R^2=1
    \end{align}
    where $[A,B]=AB-BA$ and $\{A,B\}=AB+BA$ are the usual commutator and anticommutator.
    There is a Casimir operator 
    \begin{align}
    Q = (J_+J_- -J_0+\tfrac12)R
    \end{align}
    which commutes with all the generators. Moreover, $\mathfrak{osp}(1|2)$ also admits a Hopf algebra structure. In particular, there is an algebra morphism called the coproduct defined on the universal enveloping algebra $\Delta : U(\mathfrak{osp}(1|2)) \to U(\mathfrak{osp}(1|2)) \otimes U(\mathfrak{osp}(1|2))$ which acts as follows 
    \begin{align} \label{coproduct}
    \Delta(J_\pm) = J_\pm \otimes R + 1\otimes J_\pm, \quad \Delta(J_0) = J_0 \otimes1 + 1\otimes J_0, \quad \Delta(R)= R\otimes R
    \end{align}
    and a $*$-structure given by $J_\pm^* = J_\mp, J_0^*=J_0$ and $R^*=R$. 
    The Hilbert space $\ell^2(\mathbb{Z_+})$ equipped with the orthonormal basis $e_n^{(\mu,\epsilon)}, n=0,1,\dots$ supports irreducible representations $(\mu,\epsilon)$ of $\mathfrak{osp}(1|2)$ indexed by two parameters $\mu>0$ and $\epsilon=\pm1$ :
    \begin{align} 
    \begin{aligned} \label{osp12Action}
    J_0 e_{n}^{(\mu,\epsilon)} = (n+\mu+\tfrac12)e_{n}^{(\mu,\epsilon)}, \qquad Re_{n}^{(\mu,\epsilon)} = \epsilon(-1)^ne_{n}^{(\mu,\epsilon)},\\
    J_+e_{n}^{(\mu,\epsilon)} = [n+1]_\mu^{1/2}e_{n+1}^{(\mu,\epsilon)}, \qquad J_-e_{n}^{(\mu,\epsilon)} = [n]_\mu^{1/2}e_{n-1}^{(\mu,\epsilon)}
    \end{aligned}
    \end{align}
    where $[n]_\mu = n + (1-(-1)^n)\mu$ denotes again the $\mu$-number.
    It is called the positive discrete series representation and the decomposition into irreducible components of the tensor product of two such representations is given by
    \begin{align} \label{osp12Decomp}
    (\mu_1,\epsilon_1) \otimes (\mu_2,\epsilon_2) = \bigoplus_{j=0}^\infty \left(\mu_1+\mu_2+\tfrac12+j,\ \epsilon_1\epsilon_2(-1)^j\right).
    \end{align}
    This Clebsch-Gordan decomposition series implies that there is a unitary transformation between the direct product and direct sum bases of the representations involved :
    \begin{align} \label{CGDecomp}
    e_N^{(\mu_{12},\epsilon_{12})} = \sum_{n_1+n_2 = N+j} C_{n_1,n_2}^{N,j} e_{n_1}^{(\mu_1,\epsilon_1)}\otimes e_{n_2}^{(\mu_2,\epsilon_2)}
    \end{align}
    where
    \begin{align} \label{decomp_param}
    \mu_{12} = \mu_1 + \mu_2 + j + \tfrac12, \quad \epsilon_{12} = \epsilon_1\epsilon_2(-1)^j, \quad j=0,1,2,\dots
    \end{align}
    The Clebsch-Gordan coefficients of $\mathfrak{osp}(1|2)$ are given in terms of the dual \m1 Hahn polynomials \cite{Genest2013b} by  
    \begin{align} \label{CGC}
    C_{n_1,n_2}^{N,j} = (-1)^{\phi(n_1,n_2,j)} \left(\frac{\epsilon_2}{2}\right)^{n_1} \sqrt{\frac{[n_2]_{\mu_2}!\ \rho_{j}(\mu_2,\mu_1,n_1+n_2)}{[n_1]_{\mu_1}![n_1+n_2]_{\mu_2}!\ \nu_0(\mu_2,\mu_1,n_1+n_2)}}\ R_{n_1}(z_j;\mu_2,\mu_1,n_1+n_2)
    \end{align}
    with the $\mu$-factorial defined by $[n]_\mu! = [1]_\mu[2]_\mu \dots [n]_\mu$. Here, we fix the phase factors to be
    \begin{align}
    \phi(n_1,n_2,j) = \frac{n_1(n_1-1)}{2}+\frac{j(j+1)}{2}+n_1(n_1+n_2+1).
    \end{align}
    The Clebsch-Gordan decomposition can also be used to recouple the multiple tensor product of irreducible representations in a pairwise fashion. 
    For instance, when considering the three-fold tensor product $(\mu_{1},\epsilon_{1})\otimes(\mu_{2},\epsilon_{2})\otimes(\mu_{3},\epsilon_{3})$ there are two standard ways of doing this. On the one hand, one can decompose the first two spaces into irreducible representations, and then couple the resulting spaces with the third component :
    \begin{align} \label{decomp1}
    (\mu_{1},\epsilon_{1})\otimes(\mu_{2},\epsilon_{2})\otimes(\mu_{3},\epsilon_{3}) = \bigoplus_{j_{12}=0}^\infty (\mu_{12},\epsilon_{12})\otimes(\mu_{3},\epsilon_{3})
    = \bigoplus_{j_{12}=0}^\infty \bigoplus_{j_{(12)3}=0}^\infty (\mu_{123},\epsilon_{123}).
    \end{align}
    On the other hand, it is possible to combine the last two spaces first and to bring in the first component subsequently : 
    \begin{align} \label{decomp2}
      (\mu_{1},\epsilon_{1})\otimes(\mu_{2},\epsilon_{2})\otimes(\mu_{3},\epsilon_{3}) = \bigoplus_{j_{23}=0}^\infty (\mu_{1},\epsilon_{1})\otimes(\mu_{23},\epsilon_{23})
    = \bigoplus_{j_{23}=0}^\infty \bigoplus_{j_{1(23)}=0}^\infty (\mu_{123},\epsilon_{123}).
    \end{align}
    Focusing on the parameters $\mu$, the following relations stem from the repeated use of \eqref{decomp_param} : 
    \begin{align}
    \mu_{123} &= \mu_1+\mu_2+\mu_3+1+j_{123}, \\
	      &= \mu_{12}+\mu_3+j_{(12)3}, \\
	      &= \mu_1 + \mu_{23}+j_{1(23)}
    \end{align}
    and
    \begin{align}
    \mu_{12} = \mu_1 + \mu_2 + j_{12}, \qquad
    \mu_{23} = \mu_2 + \mu_3 + j_{23}.
    \end{align}
    Analogous relations can be found for the parameters $\epsilon$. These equations imply that the five decomposition integers $j$ are constrained :
    \begin{align} \label{jrelations}
    j_{123} = j_{1(23)}+j_{23} = j_{(12)3}+j_{12}.
    \end{align}
    While only three decomposition integers are independent, it will be convenient to use all five to simplify the notation especially when dealing with indices. 
    Now, to each of the two decomposition schemes one can associate a basis. To the scheme \eqref{decomp1} corresponds
    \begin{align} \label{Rbasis1}
    f_{n_{123}}^{j_{123},j_{12}} &= \sum_{n_{12}+n_3} C_{n_{12},n_{3}}^{n_{123},j_{(12)3}} e_{n_{12}}^{(\mu_{12},\epsilon_{12})}\otimes e_{n_{3}}^{(\mu_{3},\epsilon_{3})} \\
    &= \sum_{n_{12}+n_3} \sum_{n_1+n_2} C_{n_{12},n_{3}}^{n_{123},j_{(12)3}} C_{n_{1},n_{2}}^{n_{12},j_{12}}\ e_{n_{1}}^{(\mu_{1},\epsilon_{1})}\otimes e_{n_{2}}^{(\mu_{2},\epsilon_{2})}\otimes e_{n_{3}}^{(\mu_{3},\epsilon_{3})}
    \end{align}
    where the sums run over $n_{123}+j_{123} = n_{12} + n_3$, $n_{12}+j_{12}=n_1+n_2$ and to \eqref{decomp2}, the basis
    \begin{align} \label{Rbasis2}
    g_{n_{123}}^{j_{123},j_{23}} &= \sum_{n_{1}+n_{23}} C_{n_{1},n_{23}}^{n_{123},j_{1(23)}} e_{n_{1}}^{(\mu_{1},\epsilon_{1})}\otimes e_{n_{23}}^{(\mu_{23},\epsilon_{23})} \\
    &= \sum_{n_{1}+n_{23}} \sum_{n_2+n_3} C_{n_{1},n_{23}}^{n_{123},j_{1(23)}} C_{n_{2},n_{3}}^{n_{23},j_{23}}\ e_{n_{1}}^{(\mu_{1},\epsilon_{1})}\otimes e_{n_{2}}^{(\mu_{2},\epsilon_{2})}\otimes e_{n_{3}}^{(\mu_{3},\epsilon_{3})}
    \end{align}
    where $n_{123}+j_{123} = n_1 + n_{23}$ and $n_{23}+j_{23}=n_2+n_3$. 
    The connection coefficients for these two bases are called the Racah coefficients. Explicitly, 
    \begin{align} \label{RDecomp}
    f_{n_{123}}^{j_{123},j_{12}} = \sum_{j_{23}=0}^{j_{123}} \mathcal{R}_{j_{12},j_{23},j_{123}}^{\mu_1,\mu_2,\mu_3} g_{n_{123}}^{j_{123},j_{23}}.
    \end{align}
    It has been shown in \cite{Genest2014a} that the Racah coefficients for the $\mathfrak{osp}(1|2)$ Lie superalgebra can be expressed in terms of Bannai-Ito polynomials given in \eqref{hypergeo-expr}. One has
    \begin{align}
    \mathcal{R}_{j_{12},j_{23},j_{123}}^{\mu_1,\mu_2,\mu_3} = (-1)^\varphi \epsilon_3^{j_{12}} \sqrt{\frac{w_{j_{23}}}{h_{j_{12}}}}~ B_{j_{12}}\left(x_{j_{23}};\tfrac{\mu_2+\mu_3}{2},\tfrac{\mu_1+(-1)^{j_{123}}\mu_{123}}{2},\tfrac{\mu_3-\mu_2}{2},\tfrac{(-1)^{j_{123}}\mu_{123}-\mu_1}{2}\right)
    \end{align}
    where the $x_k, w_k, h_n$ are given in equations (\ref{gridi}--\ref{Ortho-Norm2}) where the parameters of the Bannai-Ito polynomials $\rho_i,r_i, i=1,2$ are assumed to be the same as in the polynomial $B_{j_{12}}(x;\rho_1,\rho_2,r_1,r_2)$ above. The choice of phase factor relevant for this paper is 
    \begin{align}
    \varphi = j_{123}~\frac{(j_{12}-1) j_{12}}{2}+(j_{123}+1) \left(j_{23}+\frac{(j_{12}+1)j_{12}}{2}\right). 
    \end{align}

\section{Convolution identities}

  The construction of convolution identities for orthogonal polynomials that we are proposing here will proceed along the  following lines : select an appropriate self-adjoint element $X$ from the Lie superalgebra, construct its generalized eigenvectors in a given representation and study their overlaps in the tensor product of representations. In order to obtain orthogonal polynomials, this Lie superalgebra element should act as a three-term recurrence operator in the chosen representation. Furthermore, the chosen element should generate a coideal subalgebra to ensure proper behavior under tensor product of representations. The self-adjoint element we consider here is   
  \begin{align} \label{X_c}
  X_c = J_+ + J_- + cR
  \end{align}
  in the $\mathfrak{osp}(1|2)$ Lie superalgebra depending a single parameter $c\in\mathbb{R}$. While this is not the most general self-adjoint element in $\mathfrak{osp}(1|2)$, the addition of a $J_0$ term would break the coideal property given in equation \eqref{coidealP}. Indeed, this property is non-trivial for algebras with a twisted coproduct such as \eqref{coproduct}. See \cite{Koelink1998} for example, where this construction is done for $\mathfrak{su}(1,1)$ and $U_q(\mathfrak{su}(1,1))$ where the coproduct is untwisted in the first case and twisted in the second. 

  With the element $X_c$ at hand, we shall study its discrete series representations and compute its generalized eigenvectors in the next subsection. In the subsequent ones, we will respectively consider the two-fold and three-fold tensor product of irreducible representations and derive two convolution identities.     

  \subsection{Action of $X_c$ in the positive discrete series representation}

    The operator $X_c$ has a tridiagonal structure on the representation space $(\mu,\epsilon)$ defined in \eqref{osp12Action}. One has 
    \begin{align}
    X_c e_{n}^{(\mu,\epsilon)} = [n+1]_\mu^{1/2}e_{n+1}^{(\mu,\epsilon)} + c\epsilon(-1)^ne_{n}^{(\mu,\epsilon)} + [n]_\mu^{1/2}e_{n-1}^{(\mu,\epsilon)}.
    \end{align}
    Let $v_{\lambda}^c$ denote the eigenvector with eigenvalue $\lambda$ of $X_c$. Then, there is an expansion of the form
    \begin{align}
    v_\lambda^c = \sum_{n=0}^\infty a_n e_n^{(\mu,\epsilon)}, \qquad a_n\in\mathbb{R}.
    \end{align}
    Acting on both sides of this equation with the operator $X_c$ gives the following 3-term recurrence relation on the expansion coefficients $a_n$ :
    \begin{align} \label{3termRRinderivation}
    \lambda a_n = [n+1]_\mu^{1/2} a_{n+1} + c\epsilon(-1)^n a_{n} + [n]_\mu^{1/2} a_{n-1}.
    \end{align}
    There is a solution of the form $a_n = P_n(\lambda)\cdot a_0$ where the $P_n(\lambda)$ are orthogonal polynomials satisfying the recurrence relation
    \begin{align} 
    \lambda P_n(\lambda) = [n+1]_\mu^{1/2} P_{n+1}(\lambda) + c\epsilon(-1)^n P_{n}(\lambda) + [n]_\mu^{1/2} P_{n-1}(\lambda).
    \end{align}
    Comparing this equation with the recurrence relation \eqref{OpeghRR}, one sees directly that the $P_n(\lambda)$ are Specialized Chihara polynomials with the following parameters :
    \begin{align}
    P_n(\lambda) = P_n(\lambda;\mu,c\epsilon).
    \end{align}
    Taking into account the normalization factor in the orthogonality relation \eqref{OpeghOR}, the polynomials $P_n(\lambda;\mu,c\epsilon)$ are thus {\em orthonormal} with respect to the weight function 
    \begin{align}
    W(\lambda,\mu,c\epsilon) = \frac{w(\lambda,\mu,c\epsilon)}{2\Gamma(\mu+\tfrac12)}
    \end{align}
    on the interval $F=(-\infty,|c|~)\cup(~|c|,\infty)$ where the $w(\lambda,\mu,c\epsilon)$ are given in \eqref{OpeghW}. If one asks that the eigenvectors $v_\lambda^c$ be orthonormal, it is easy to see that this implies $a_0=1$ and that the generalized eigenvectors of $X_c$ are
    \begin{align} \label{XcEV}
    v_\lambda^c = \sum_{n=0}^\infty P_n(\lambda;\mu,c\epsilon)~ e_n^{(\mu,\epsilon)}.
    \end{align}
    Note that the series in \eqref{XcEV} does not converge in the representation space and should be considered as a formal eigenvector.
    We reformulate this result in the following proposition.
    \begin{proposition} \label{prop1}
    The unitary operator
    \begin{align}
    \begin{aligned}
    \Lambda :\quad \ell^2(\mathbb{Z_+}) &\to L^2(F,W(\lambda,\mu,c\epsilon)) \\
		    e_n^{(\mu,\epsilon)} \ \ &\mapsto P_n(\lambda,\mu,c\epsilon)
    \end{aligned}
    \end{align}
    is an intertwiner of the operator $M_\lambda$ which denotes multiplication by $\lambda$ on $L^2(F,W(\lambda,\mu,c\epsilon))$ and the operator $X_c$ acting in $\ell^2(\mathbb{Z_+})$ :
    \begin{align}
    M_\lambda \Lambda = \Lambda X_c.
    \end{align}
    \end{proposition}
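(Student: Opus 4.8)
The statement bundles two assertions — that $\Lambda$ is unitary and that it intertwines $X_c$ with multiplication by $\lambda$ — and I would treat them separately. Since $\{e_n^{(\mu,\epsilon)}\}_{n\ge0}$ is by construction an orthonormal basis of $\ell^2(\mathbb{Z_+})$, unitarity of $\Lambda$ is equivalent to the family $\{P_n(\lambda;\mu,c\epsilon)\}_{n\ge0}$ being an orthonormal basis of $L^2(F,W(\lambda,\mu,c\epsilon))$. Orthonormality is immediate: dividing the orthogonality relation \eqref{OpeghOR} by the normalization constant $2\Gamma(\mu+\tfrac12)$ that defines $W$ gives $\langle P_n,P_m\rangle_{L^2(F,W)}=\delta_{n,m}$. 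Consequently $\Lambda$ extends by linearity to an isometry of $\ell^2(\mathbb{Z_+})$ onto the closed linear span of the $P_n$, and it is unitary precisely when that span exhausts $L^2(F,W)$.

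For the intertwining relation I would first verify it on the dense domain $\mathcal{D}$ of finite linear combinations of the $e_n^{(\mu,\epsilon)}$, where everything reduces to a finite computation. On one side, $M_\lambda\Lambda e_n^{(\mu,\epsilon)}=\lambda P_n(\lambda;\mu,c\epsilon)$, and the three-term recurrence \eqref{OpeghRR} with $\gamma=c\epsilon$ rewrites this as $[n+1]_\mu^{1/2}P_{n+1}+c\epsilon(-1)^nP_n+[n]_\mu^{1/2}P_{n-1}$. On the other side, applying $\Lambda$ to the tridiagonal action of $X_c$ on $e_n^{(\mu,\epsilon)}$ displayed above produces exactly the same three terms. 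Hence $M_\lambda\Lambda=\Lambda X_c$ on $\mathcal{D}$; the only remaining task is to promote this to an identity of the (unbounded) operators.

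Both outstanding points — completeness of the $P_n$ and the extension of the intertwiner beyond $\mathcal{D}$ — are controlled by the same piece of analysis, and this is where I expect the real work to lie. The restriction of $X_c$ to $\mathcal{D}$ is a symmetric tridiagonal (Jacobi-type) operator whose off-diagonal entries grow like $[n+1]_\mu^{1/2}\sim\sqrt{n}$ and whose diagonal entries $c\epsilon(-1)^n$ are bounded. Carleman's condition $\sum_n[n]_\mu^{-1/2}=\infty$ then guarantees that the associated moment problem is determinate, so $X_c$ is essentially self-adjoint on $\mathcal{D}$; the spectral theorem for its closure simultaneously yields that the orthonormal polynomials $P_n$ are complete in $L^2(F,W)$ and that $\Lambda$ implements the spectral (multiplication-by-$\lambda$) model of $X_c$. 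This is precisely the rigorous content behind regarding \eqref{XcEV} as a formal eigenvector: once determinacy is in place, $\Lambda$ is unitary and the basis-level equality extends to $M_\lambda\Lambda=\Lambda X_c$ on the full domains, completing the proof. An alternative route to completeness is to push $W$ forward under $u=(\lambda^2-\gamma^2)/2$ and invoke the known completeness of the Laguerre systems appearing in \eqref{ExplicitOpegh}, but the Jacobi-operator argument settles self-adjointness and completeness in one stroke.
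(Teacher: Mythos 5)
Your proof is correct and follows the same skeleton as the paper's: unitarity is obtained from the fact that $\Lambda$ maps the orthonormal basis $e_n^{(\mu,\epsilon)}$ onto the orthonormal system $P_n(\lambda;\mu,c\epsilon)$, and the intertwining relation is read off from the three-term recurrence computation carried out just before the proposition. The difference is one of rigor rather than of route: the paper's proof is two sentences long and silently assumes both that the $P_n$ are complete in $L^2(F,W(\lambda,\mu,c\epsilon))$ (without which $\Lambda$ is only an isometry onto a closed subspace) and that the basis-level identity $M_\lambda\Lambda=\Lambda X_c$ extends beyond finite linear combinations. You correctly isolate these two gaps and close both at once with the standard Jacobi-operator argument: Carleman's condition $\sum_n [n]_\mu^{-1/2}=\infty$ gives determinacy of the moment problem, hence density of the polynomials in $L^2(F,W)$ and essential self-adjointness of $X_c$ on the span of the $e_n^{(\mu,\epsilon)}$, so the spectral theorem upgrades the dense-domain identity to the stated one. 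Notably, the paper itself appeals to exactly this Carleman criterion, but only later, in the proof of Proposition \ref{prop3}, where determinacy is used to identify the inner integral; your proof imports that piece of analysis to the place where it is first needed, which is a genuine improvement in exposition even though the underlying approach is the same.
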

    \begin{proof}
    The unitarity of $\Lambda$ is checked from the fact that it maps an orthonormal basis onto another one. The intertwining relation derives directly from the above computation.
    \end{proof}

  \subsection{Action on the tensor product space}

    We now wish to study the action of the coproduct of $X_c$ on a tensor product of irreducible representations $(\mu_1,\epsilon_1)\otimes(\mu_2,\epsilon_2)$ and obtain its generalized eigenvectors. A straightforward computation yields
    \begin{align} \label{coidealP}
    \Delta(X_c) = X_c \otimes R + 1 \otimes X_0
    \end{align}
    where $X_0$ denotes the operator $X_c$ with its parameter set to zero. In view of how it acts on the first space in the tensor product, it is natural to study the action of $\Delta(X_c)$ on vectors of the following form :
    \begin{align}
    \Delta(X_c) v_{\lambda_1}^c\otimes e_{n_2}^{(\mu_2,\epsilon_2)}
      &= v_{\lambda_1}^c \otimes (\lambda_1 R + X_0) e_{n_2}^{(\mu_2,\epsilon_2)}\\
      &= v_{\lambda_1}^c \otimes X_{\lambda_1}e_{n_2}^{(\mu_2,\epsilon_2)}
    \end{align}
    where $v_{\lambda_1}^c$ is an eigenvector of $X_c$ given by \eqref{XcEV} and $X_{\lambda_1}$ is the operator \eqref{X_c} with parameter $\lambda_1$. It follows that the generalized eigenvectors of $\Delta(X_c)$ are 
    \begin{align}
    v_{\lambda_1,\lambda_2}^c = \sum_{n_1,n_2=0}^\infty P_{n_1}(\lambda_1;\mu_1,c\epsilon_1) P_{n_2}(\lambda_2;\mu_2,\lambda_1\epsilon_2)\ e_{n_1}^{(\mu_1,\epsilon_1)}\otimes e_{n_2}^{(\mu_2,\epsilon_2)}
    \end{align}
    with eigenvalues $\lambda_2$.
    This allows us to establish the following proposition.
    \begin{proposition} \label{prop2}
    The unitary operator
    \begin{align}
    \begin{aligned} \label{Upsilon}
    \Upsilon :\quad \ell^2(\mathbb{Z_+})\otimes\ell^2(\mathbb{Z_+}) &\to L^2(G,W(\lambda_1,\mu_1,c\epsilon_1)W(\lambda_2,\mu_2,\lambda_1\epsilon_2)) \\
		    e_{n_1}^{(\mu_1,\epsilon_1)}\otimes e_{n_2}^{(\mu_2,\epsilon_2)} \ \ &\mapsto P_{n_1}(\lambda_1;\mu_1,c\epsilon_1) P_{n_2}(\lambda_2;\mu_2,\lambda_1\epsilon_2)
    \end{aligned}
    \end{align}
    with 
    \[ G = \{(\lambda_1,\lambda_2)\in\mathbb{R} ~\big|~ |\lambda_2|>|\lambda_1|>|c|\} \] 
    is an intertwiner of the operator $M_{\lambda_2}$ denoting multiplication by $\lambda_2$ on $L^2(G,W(\lambda_1,\mu_1,c\epsilon_1)W(\lambda_2,\mu_2,\lambda_1\epsilon_2))$ and the operator $\Delta(X_c)$ acting in $\ell^2(\mathbb{Z_+})\otimes\ell^2(\mathbb{Z_+})$ :
    \begin{align} \label{Intertwine2}
    M_{\lambda_2} \Upsilon = \Upsilon \Delta(X_c).
    \end{align}
    \end{proposition}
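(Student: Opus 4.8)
The plan is to treat the two assertions of Proposition \ref{prop2} separately: the intertwining relation \eqref{Intertwine2}, which is a direct computation on basis vectors, and the unitarity of $\Upsilon$, which is the technical heart. Since $\Delta(X_c)$ and $M_{\lambda_2}$ are determined by their action on the orthonormal basis $\{e_{n_1}^{(\mu_1,\epsilon_1)}\otimes e_{n_2}^{(\mu_2,\epsilon_2)}\}$, it suffices to verify \eqref{Intertwine2} there, applying $\Upsilon$ term by term to the (finitely supported) vector $\Delta(X_c)(e_{n_1}^{(\mu_1,\epsilon_1)}\otimes e_{n_2}^{(\mu_2,\epsilon_2)})$.

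Using the coideal form \eqref{coidealP}, $\Delta(X_c)=X_c\otimes R+1\otimes X_0$, together with the representation formulas \eqref{osp12Action}, the action splits into a first-leg part from $X_c\otimes R$ and a second-leg part from $1\otimes X_0$. I would first apply $\Upsilon$ to the first-leg part: the three terms coming from $X_c e_{n_1}^{(\mu_1,\epsilon_1)}$ recombine, through the Specialized Chihara recurrence \eqref{OpeghRR} in the variable $\lambda_1$ with parameter $c\epsilon_1$, into the single factor $\lambda_1 P_{n_1}(\lambda_1;\mu_1,c\epsilon_1)$, while $R e_{n_2}^{(\mu_2,\epsilon_2)}=\epsilon_2(-1)^{n_2}e_{n_2}^{(\mu_2,\epsilon_2)}$ supplies the sign $\epsilon_2(-1)^{n_2}$. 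This is exactly the mechanism by which the twist $R$ in the coproduct feeds the parameter $\lambda_1\epsilon_2$ into the second factor. Adding the second-leg part, the total coefficient of $P_{n_1}(\lambda_1;\mu_1,c\epsilon_1)$ becomes
\[
[n_2+1]_{\mu_2}^{1/2}P_{n_2+1}(\lambda_2;\mu_2,\lambda_1\epsilon_2)+\lambda_1\epsilon_2(-1)^{n_2}P_{n_2}(\lambda_2;\mu_2,\lambda_1\epsilon_2)+[n_2]_{\mu_2}^{1/2}P_{n_2-1}(\lambda_2;\mu_2,\lambda_1\epsilon_2),
\]
which is precisely the right-hand side of \eqref{OpeghRR} for $P_{n_2}(\lambda_2;\mu_2,\lambda_1\epsilon_2)$ and therefore equals $\lambda_2 P_{n_2}(\lambda_2;\mu_2,\lambda_1\epsilon_2)$. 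Hence $\Upsilon\Delta(X_c)(e_{n_1}^{(\mu_1,\epsilon_1)}\otimes e_{n_2}^{(\mu_2,\epsilon_2)})=\lambda_2\,\Upsilon(e_{n_1}^{(\mu_1,\epsilon_1)}\otimes e_{n_2}^{(\mu_2,\epsilon_2)})$, which is \eqref{Intertwine2}.

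For unitarity I would show, as in Proposition \ref{prop1}, that $\Upsilon$ maps the orthonormal tensor basis onto a complete orthonormal system. The weight factorizes, so the inner product of two images is a double integral over $G$ that I would evaluate iteratively by Fubini. Integrating first in $\lambda_2$ at fixed $\lambda_1$: since the parameter of the second factor is $\lambda_1\epsilon_2$ with $|\lambda_1\epsilon_2|=|\lambda_1|$, the orthogonality support \eqref{OpeghOR} of $P_{n_2}(\cdot;\mu_2,\lambda_1\epsilon_2)$ is exactly $\{|\lambda_2|>|\lambda_1|\}$ and the integral yields $\delta_{n_2,m_2}$ independently of $\lambda_1$; the remaining integral in $\lambda_1$ over $\{|\lambda_1|>|c|\}$ then gives $\delta_{n_1,m_1}$ by the normalized orthogonality of $P_{n_1}(\cdot;\mu_1,c\epsilon_1)$. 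This nesting of supports is exactly why the domain must be $G=\{(\lambda_1,\lambda_2):|\lambda_2|>|\lambda_1|>|c|\}$. Completeness is cleanest to phrase by viewing $L^2(G,\cdots)$ as the direct integral over $\{|\lambda_1|>|c|\}$ of the fibers $L^2(\{|\lambda_2|>|\lambda_1|\},W(\lambda_2;\mu_2,\lambda_1\epsilon_2))$ with weight $W(\lambda_1;\mu_1,c\epsilon_1)\,d\lambda_1$: in each fiber the family $\{P_{n_2}(\cdot;\mu_2,\lambda_1\epsilon_2)\}_{n_2}$ is a basis, and combining this with completeness of $\{P_{n_1}(\cdot;\mu_1,c\epsilon_1)\}_{n_1}$ yields the claim.

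The intertwining identity is routine, reducing to two applications of \eqref{OpeghRR}. The main obstacle is the unitarity, where the weight and the orthogonality support in the second variable both genuinely depend on $\lambda_1$; one must track this dependence carefully (it is the source of the nested domain $G$) and, for completeness, justify the direct-integral argument instead of a naive factorization of $L^2(G,\cdots)$ as a tensor product, since the second polynomial family varies with $\lambda_1$.
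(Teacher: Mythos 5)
Your proof is correct and follows essentially the same route as the paper: the intertwining relation rests on the coideal form $\Delta(X_c)=X_c\otimes R+1\otimes X_0$ plus two applications of the recurrence \eqref{OpeghRR} (the paper packages this same computation as the formal eigenvectors $v_{\lambda_1,\lambda_2}^c$ constructed just before the proposition), and unitarity comes from mapping the orthonormal tensor-product basis onto an orthonormal system. Your treatment of unitarity is in fact more detailed than the paper's one-line assertion---the Fubini computation with nested supports $|\lambda_2|>|\lambda_1|>|c|$ and the direct-integral completeness argument are precisely the details the paper leaves implicit.
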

    \begin{proof}
     The proof is similar to that of the previous proposition. Unitarity follows from the mapping of an orthonormal basis onto another one and the intertwining relation is a restatement of the eigenvectors computed above.
    \end{proof}

    Now, in view of the Clebsch-Gordan decomposition \eqref{osp12Decomp} of $(\mu_1,\epsilon_1)\otimes(\mu_2,\epsilon_2)$ into irreducible representations, there exists another orthonormal basis $e_{N}^{(\mu_{12},\epsilon_{12})}$ with $N=0,1,\dots$ and
    \begin{align} \label{mu12}
    \mu_{12} = \mu_1 + \mu_2 + j + \tfrac12, \quad \epsilon_{12} = \epsilon_1\epsilon_2(-1)^j, \quad j=0,1,\dots
    \end{align}
    where $j$ labels the irreducible subspaces of the form $(\mu_{12},\epsilon_{12})$ in the tensor product space. This basis is often referred to as the {\em coupled basis}, while the basis $e_{n_1}^{(\mu_1,\epsilon_1)}\otimes e_{n_2}^{(\mu_2,\epsilon_2)}$ is called the {\em uncoupled basis}. The operator $\Upsilon$ also maps the coupled basis to a set of orthonormal polynomials in $L^2(G,W(\lambda_1,\mu_1,c\epsilon_1)W(\lambda_2,\mu_2,\lambda_1\epsilon_2))$. 

    \begin{proposition} \label{prop3}
    In $L^2(G,W(\lambda_1,\mu_1,c\epsilon_1)W(\lambda_2,\mu_2,\lambda_1\epsilon_2))$, we have
    \begin{align} \label{UpsiloneN}
    \Upsilon e_{N}^{(\mu_{12},\epsilon_{12})}(\lambda_1,\lambda_2) = P_N(\lambda_2;\mu_{12},c\epsilon_{12})\Upsilon e_{0}^{(\mu_{12},\epsilon_{12})}(\lambda_1,\lambda_2),
    \end{align}
    \begin{align} \label{Upsilone0}
    \Upsilon e_{0}^{(\mu_{12},\epsilon_{12})}(\lambda_1,\lambda_2) = K_j(\lambda_2;\mu_2,\mu_1;c)\ J_j\left( \frac{\epsilon_2\lambda_1}{\lambda_2};\ 2\mu_2, 2\mu_1, -\frac{c\epsilon_1\epsilon_2}{\lambda_2} \right)
    \end{align}
    with
    \begin{align} \label{KN}
    K_j(\lambda_2;\mu_2,\mu_1;c) = \sqrt{ \left(\frac{\lambda_2^2-c^2}{2}\right)^j \left(\frac{\lambda_2 - c\epsilon_1\epsilon_2}{\lambda_2 - (-1)^j c\epsilon_1\epsilon_2}\right) \frac{\Gamma(\mu_1+\tfrac12)\Gamma(\mu_2+\tfrac12)}{\Gamma(\mu_{12}+\tfrac12) h_j(2\mu_2,2\mu_1)} }
    \end{align}
    where the notation of section 2 for the Specialized Chihara and the Big \m1 Jacobi polynomials is assumed.
    \end{proposition}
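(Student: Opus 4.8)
The plan is to establish the two statements \eqref{UpsiloneN} and \eqref{Upsilone0} in turn; the first follows formally from the intertwining property of Proposition~\ref{prop2}, while the second rests on an explicit evaluation of a terminating Clebsch--Gordan sum.

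For \eqref{UpsiloneN}, the essential point is that $X_c=J_++J_-+cR$ is a sum of generators, so by linearity $\Delta(X_c)=\Delta(J_+)+\Delta(J_-)+c\,\Delta(R)$ acts on the coupled basis $e_N^{(\mu_{12},\epsilon_{12})}$ precisely as the single-copy operator $X_c$ acts in the irreducible module $(\mu_{12},\epsilon_{12})$, namely tridiagonally with the coefficients read off from \eqref{osp12Action} upon replacing $(\mu,\epsilon)$ by $(\mu_{12},\epsilon_{12})$. Writing $F_N:=\Upsilon e_N^{(\mu_{12},\epsilon_{12})}$ and applying the intertwining relation \eqref{Intertwine2}, one obtains the three-term recurrence
\[
\lambda_2 F_N = [N+1]_{\mu_{12}}^{1/2}F_{N+1}+c\epsilon_{12}(-1)^N F_N+[N]_{\mu_{12}}^{1/2}F_{N-1},
\]
with $F_{-1}=0$. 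This is exactly the Specialized Chihara recurrence \eqref{OpeghRR} with parameters $(\mu_{12},c\epsilon_{12})$, whose solution subject to these initial data is $F_N=P_N(\lambda_2;\mu_{12},c\epsilon_{12})\,F_0$; since the recurrence coefficients are independent of $\lambda_1$, the factor $F_0=\Upsilon e_0^{(\mu_{12},\epsilon_{12})}$ carries the entire $\lambda_1$-dependence, which is \eqref{UpsiloneN}.

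For \eqref{Upsilone0}, I would expand the lowest weight vector $e_0^{(\mu_{12},\epsilon_{12})}$ in the uncoupled basis by means of \eqref{CGDecomp} at $N=0$, producing the finite sum
\[
\Upsilon e_0^{(\mu_{12},\epsilon_{12})}=\sum_{n_1+n_2=j}C_{n_1,n_2}^{0,j}\,P_{n_1}(\lambda_1;\mu_1,c\epsilon_1)\,P_{n_2}(\lambda_2;\mu_2,\lambda_1\epsilon_2),
\]
in which the coefficients $C_{n_1,n_2}^{0,j}$ are the dual \m1 Hahn values \eqref{CGC}. I would then insert the Laguerre representation \eqref{ExplicitOpegh} of the two Chihara factors; using $\epsilon_i^2=1$ their Laguerre arguments are $\tfrac{\lambda_1^2-c^2}{2}$ and $\tfrac{\lambda_2^2-\lambda_1^2}{2}$, so their sum $\tfrac{\lambda_2^2-c^2}{2}$ is free of $\lambda_1$ and produces the prefactor $(\tfrac{\lambda_2^2-c^2}{2})^{j}$ of $K_j$, while their ratio $\tfrac{\lambda_2^2-\lambda_1^2}{\lambda_2^2-c^2}$ is exactly the hypergeometric argument $\tfrac{1-x^2}{1-\tilde c^2}$ of \eqref{Jacobi} under the substitution $x=\epsilon_2\lambda_1/\lambda_2$, $\tilde c=-c\epsilon_1\epsilon_2/\lambda_2$. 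The main work is to collapse the resulting multiple hypergeometric sum into the two ${}_2F_1$ pieces of the Big \m1 Jacobi polynomial: I would split the terms according to the parity of $n_1$ (equivalently of $n_2=j-n_1$), the even and odd sectors assembling into the two summands of \eqref{Jacobi}, and carry out the internal summation over $n_1$ by a terminating Chu--Vandermonde or ${}_3F_2$ evaluation, the dual \m1 Hahn weights $C^{0,j}_{n_1,n_2}$ being precisely what makes this Laguerre-type convolution telescope.

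I expect this last recognition step to be the principal obstacle, since it requires careful bookkeeping of the parity-split Pochhammer factors appearing in both \eqref{CGC} and \eqref{ExplicitOpegh} and a correct pairing of the two parity sectors with the internal parameters $2\mu_2,2\mu_1$ and $\tilde c$. Finally, I would pin down the scalar $K_j$ in \eqref{KN} by evaluating both sides at $\lambda_1=\epsilon_2\lambda_2$: there $x=1$ and $J_j(1;\cdot)=1$ identically, while on the left the factor $P_{n_2}(\lambda_2;\mu_2,\lambda_2)$ vanishes for odd $n_2$ and equals a Laguerre value at the origin for even $n_2$, so that only the terms with $n_2$ even survive and the sum reduces to a single Chu--Vandermonde-summable expression. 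Since $n_1\equiv j\pmod 2$ on these surviving terms, the linear factor of the odd Chihara polynomials contributes $(\epsilon_2\lambda_2-c\epsilon_1)=\epsilon_2(\lambda_2-c\epsilon_1\epsilon_2)$ exactly when $j$ is odd, which together with the identity $\lambda_2^2-c^2=(\lambda_2-c\epsilon_1\epsilon_2)(\lambda_2+c\epsilon_1\epsilon_2)$ reproduces the parity-dependent ratio $\tfrac{\lambda_2-c\epsilon_1\epsilon_2}{\lambda_2-(-1)^jc\epsilon_1\epsilon_2}$ together with the remaining normalization constants $h_j(2\mu_2,2\mu_1)$ and $\Gamma$-factors of \eqref{KN}.
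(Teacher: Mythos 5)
Your treatment of \eqref{UpsiloneN} coincides with the paper's: the intertwining relation \eqref{Intertwine2}, together with the fact that $\Delta(X_c)$ acts on the coupled basis through \eqref{osp12Action} with parameters $(\mu_{12},\epsilon_{12})$, yields the Chihara recurrence in $N$, solved by $\Upsilon e_N^{(\mu_{12},\epsilon_{12})}=P_N(\lambda_2;\mu_{12},c\epsilon_{12})\,\Upsilon e_0^{(\mu_{12},\epsilon_{12})}$. That part is fine.

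For \eqref{Upsilone0} you propose a genuinely different route. The paper never touches the explicit Clebsch--Gordan coefficients: it uses unitarity of $\Upsilon$ to get orthogonality relations, integrates out $\lambda_1$ first, invokes determinacy of the Chihara moment problem (Carleman's condition) to conclude that the inner $\lambda_1$-integral must reproduce the Chihara orthogonality measure in $\lambda_2$, observes from \eqref{CGDecomp} at $N=0$ that $\Upsilon e_0^{(\mu_{12},\epsilon_{12})}$ is a polynomial of degree $j$, and then identifies the conditional weight in $u=\epsilon_2\lambda_1/\lambda_2$ with the Big $-1$ Jacobi weight \eqref{Weight-1}; uniqueness of orthogonal polynomials for a determinate measure gives \eqref{Upsilone0}, and $K_j$ comes from computing one normalization integral. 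You instead start from $\sum_{n_1+n_2=j}C^{0,j}_{n_1,n_2}P_{n_1}(\lambda_1;\mu_1,c\epsilon_1)P_{n_2}(\lambda_2;\mu_2,\lambda_1\epsilon_2)$ with the explicit dual $-1$ Hahn values \eqref{CGC} and try to resum. Your preparatory observations are all correct: the Laguerre arguments $\tfrac{\lambda_1^2-c^2}{2}$ and $\tfrac{\lambda_2^2-\lambda_1^2}{2}$, the identification of $\tfrac{\lambda_2^2-\lambda_1^2}{\lambda_2^2-c^2}$ with the hypergeometric argument of \eqref{Jacobi}, and the normalization trick at $\lambda_1=\epsilon_2\lambda_2$ (where $J_j=1$ and only even $n_2$ survives) are all sound.

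The gap is that the resummation itself --- which is where the entire content of \eqref{Upsilone0} lies --- is only asserted, and the mechanism you sketch for it is misaligned. Splitting by the parity of $n_1$ does not make the two sectors match the two summands of \eqref{Jacobi}: when $n_1$ and $n_2$ are both odd, the product carries the prefactor $(\lambda_1-c\epsilon_1)(\lambda_2-\epsilon_2\lambda_1)$, which has both even and odd parts in $\lambda_1$, and likewise the second summand of \eqref{Jacobi} carries the factor $(1-x)$, also of mixed parity; so each parity component of the left-hand side receives contributions from both sectors, and each ${}_2F_1$ summand of $J_j$ contributes to both parity components. What actually has to be proved is a triple-sum identity (two Laguerre expansions weighted by a terminating ${}_3F_2$) with parity-dependent cases, not a single terminating Chu--Vandermonde evaluation. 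The identity is true --- it is precisely \eqref{conv_id_CG} at $N=0$, which the paper obtains as a corollary of this proposition --- but your proposal does not establish it, and the paper's measure-theoretic argument exists exactly to bypass that computation.
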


    \begin{proof}
    Use the intertwining relation \eqref{Intertwine2} to write
    \begin{align}
    \lambda_2 \Upsilon e_{N}^{(\mu_{12},\epsilon_{12})} = M_{\lambda_2} \Upsilon e_{N}^{(\mu_{12},\epsilon_{12})} = \Upsilon \Delta(X_c) e_{N}^{(\mu_{12},\epsilon_{12})}.
    \end{align}
    The action of $\Delta(X_c)$ on the irreducible spaces $(\mu_{12},\epsilon_{12})$ is given by the relations \eqref{osp12Action}. This yields a 3-term recurrence relation on $N$ of the same form as \eqref{3termRRinderivation}. Its solution is \eqref{UpsiloneN} where the initial condition $\Upsilon e_{0}^{(\mu_{12},\epsilon_{12})}(\lambda_1,\lambda_2)$ remains to be determined. To obtain it, note that the orthonormality of the vectors and the unitarity of $\Upsilon$ implies the following. Using the notation \eqref{mu12} and $\tilde{\mu}_{12} = \mu_1 + \mu_2 + \tfrac12 + \tilde{j},\ \tilde{\epsilon}_{12} = \epsilon_1\epsilon_2(-1)^{\tilde{j}}$, one has 
    \begin{align}
    \begin{aligned}
    \delta_{N,\tilde{N}} \delta_{j,\tilde{j}} &= \langle e_N^{(\mu_{12},\epsilon_{12})} , e_{\tilde{N}}^{(\tilde{\mu}_{12},\tilde{\epsilon}_{12})} \rangle 
    = \langle \Upsilon e_N^{(\mu_{12},\epsilon_{12})} , \Upsilon e_{\tilde{N}}^{(\tilde{\mu}_{12},\tilde{\epsilon}_{12})} \rangle \\[0.5em]
    &= \iint_G \ P_N(\lambda_2;\mu_{12},c\epsilon_{12})P_{\tilde{N}}(\lambda_2;\tilde{\mu}_{12},c\tilde{\epsilon}_{12})\Upsilon e_{0}^{(\mu_{12},\epsilon_{12})}(\lambda_1,\lambda_2) \Upsilon e_{0}^{(\tilde{\mu}_{12},\tilde{\epsilon}_{12})}(\lambda_1,\lambda_2) \\[0.5em] 
    &\qquad\times W(\lambda_1,\mu_1,c\epsilon_1)W(\lambda_2,\mu_2,\lambda_1\epsilon_2)d\lambda_1 d\lambda_2.
    \end{aligned}
    \end{align}
    Integrate first on $\lambda_1$ :
    \begin{align}
    \begin{aligned}
    \delta_{N,\tilde{N}} \delta_{j,\tilde{j}} &= 
    \int_{G_2} P_N(\lambda_2;\mu_{12},c\epsilon_{12})P_{\tilde{N}}(\lambda_2;\tilde{\mu}_{12},c\tilde{\epsilon}_{12}) \\[0.5em] 
    &\times \int_{G_1} \Upsilon e_{0}^{(\mu_{12},\epsilon_{12})}(\lambda_1,\lambda_2) \Upsilon e_{0}^{(\tilde{\mu}_{12},\tilde{\epsilon}_{12})}(\lambda_1,\lambda_2) 
      W(\lambda_1,\mu_1,c\epsilon_1)W(\lambda_2,\mu_2,\lambda_1\epsilon_2)d\lambda_1 d\lambda_2
    \end{aligned}
    \end{align}
    where 
    \begin{align}
    G_1 = (-\lambda_2,-|c|)\cup(|c|,\lambda_2), \qquad G_2 = (-\infty,-|c|)\cup(|c|,\infty).
    \end{align}
    If we consider the special case $j=\tilde{j}$, the inner integral on $\lambda_1$ must correspond to the orthogonality measure of the Specialized Chihara polynomials $P_N(\lambda_2;\mu_{12},c\epsilon_{12})$ since the corresponding moment problem is determined. This can be checked from the divergence of the series $\sum_{n=1}^\infty [n]_\mu^{-1/2}$ which satisfies one of Carleman's conditions for determinacy \cite{Koelink2004}. Furthermore, applying $\Upsilon$ on both sides of the Clebsch-Gordan decomposition \eqref{CGDecomp} with $N=0$, one can deduce that $\Upsilon e_0^{(\mu_{12},\epsilon_{12})}$ is a polynomial of degree $j$ in the variables $\lambda_1$ and $\lambda_2$. Taking these two observations into account, it is possible to identify $\Upsilon e_{0}^{(\mu_{12},\epsilon_{12})}$ in terms of Big $-1$ Jacobi polynomials. Indeed, setting $u = \epsilon_2\lambda_1/\lambda_2$, it is straightforward to identify the resulting weights factors with those of the Big $-1$ Jacobi polynomials given in \eqref{Weight-1} in the variable $u$. This gives the result \eqref{Upsilone0}. The normalization factor \eqref{KN} is obtained by comparing the weight function in the integral on $\lambda_2$ with that of the Specialized Chihara polynomials $P_N(\lambda_2;\mu_{12},c\epsilon_{12})$ given in \eqref{OpeghW} and computing the integral.
    \end{proof}

    Thus, the equations \eqref{UpsiloneN} and \eqref{Upsilone0} give us the action of $\Upsilon$ on the coupled basis. With these results in hand, it is now possible to obtain a convolution identity for the Specialized Chihara, Big \m1 Jacobi and dual \m1 Hahn polynomials. 

    \begin{proposition} \label{convid1}
    In the notation of section 2 for the Specialized Chihara, Big \m1 Jacobi and dual \m1 Hahn polynomials, the following convolution identity holds :
    \begin{align} \label{conv_id_CG}
    \begin{aligned}
      K_j(\lambda_2;\mu_2,\mu_1;c)\ &P_N(\lambda_2;\mu_{12},c\epsilon_{12}) J_j\left( \frac{\epsilon_2\lambda_1}{\lambda_2};\ 2\mu_2, 2\mu_1, -\frac{c\epsilon_1\epsilon_2}{\lambda_2} \right) \\[0.5em] 
      &= \sum_{n_1+n_2=N+j} (-1)^{\phi(n_1,n_2,j)} \left(\frac{\epsilon_2}{2}\right)^{n_1} \sqrt{\frac{[n_2]_{\mu_2}!\ \rho_{j}(\mu_2,\mu_1,n_1+n_2)}{[n_1]_{\mu_1}![n_1+n_2]_{\mu_2}!\ \nu_0(\mu_2,\mu_1,n_1+n_2)}}\ \\[0.5em] 
      &\hspace{2.6cm} \times R_{n_1}(z_j;\mu_2,\mu_1,n_1+n_2)P_{n_1}(\lambda_1;\mu_1,c\epsilon_1) P_{n_2}(\lambda_2;\mu_2,\lambda_1\epsilon_2).
    \end{aligned}
    \end{align}
    \end{proposition}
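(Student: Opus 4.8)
The plan is to apply the unitary operator $\Upsilon$ of Proposition \ref{prop2} to both sides of the Clebsch-Gordan decomposition \eqref{CGDecomp}. Since for fixed $N$ and $j$ the constraint $n_1+n_2=N+j$ restricts the sum to the finite range $n_1=0,\dots,N+j$, the decomposition \eqref{CGDecomp} is a finite linear combination of uncoupled basis vectors, so $\Upsilon e_N^{(\mu_{12},\epsilon_{12})}$ is unambiguously defined and no convergence issue arises despite the formal nature of the individual eigenvectors.

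First I would evaluate the left-hand side. By Proposition \ref{prop3}, equation \eqref{UpsiloneN} gives $\Upsilon e_N^{(\mu_{12},\epsilon_{12})} = P_N(\lambda_2;\mu_{12},c\epsilon_{12})\,\Upsilon e_0^{(\mu_{12},\epsilon_{12})}$, and then \eqref{Upsilone0} expresses $\Upsilon e_0^{(\mu_{12},\epsilon_{12})}$ as the product of the normalization factor $K_j(\lambda_2;\mu_2,\mu_1;c)$ of \eqref{KN} and the Big $-1$ Jacobi polynomial $J_j(\epsilon_2\lambda_1/\lambda_2;2\mu_2,2\mu_1,-c\epsilon_1\epsilon_2/\lambda_2)$. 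Multiplying these together reproduces exactly the left-hand side of \eqref{conv_id_CG}.

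Next I would compute the right-hand side by exploiting linearity of $\Upsilon$. Applying $\Upsilon$ term by term to the sum \eqref{CGDecomp} and using the prescription of Proposition \ref{prop2}, each uncoupled vector $e_{n_1}^{(\mu_1,\epsilon_1)}\otimes e_{n_2}^{(\mu_2,\epsilon_2)}$ is sent to the product $P_{n_1}(\lambda_1;\mu_1,c\epsilon_1)\,P_{n_2}(\lambda_2;\mu_2,\lambda_1\epsilon_2)$. Substituting the closed form \eqref{CGC} of the Clebsch-Gordan coefficients $C_{n_1,n_2}^{N,j}$, which are themselves expressed through the dual $-1$ Hahn polynomials $R_{n_1}(z_j;\mu_2,\mu_1,n_1+n_2)$, yields precisely the summand appearing on the right of \eqref{conv_id_CG}. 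Equating the two evaluations of $\Upsilon e_N^{(\mu_{12},\epsilon_{12})}$ then establishes the identity.

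The proof is therefore a direct substitution: all of the analytic work — the determinacy of the Specialized Chihara moment problem, the identification of $\Upsilon e_0^{(\mu_{12},\epsilon_{12})}$ with a Big $-1$ Jacobi polynomial, and the computation of the normalization $K_j$ — has already been carried out in Proposition \ref{prop3}, while the dual $-1$ Hahn form of the Clebsch-Gordan coefficients is supplied by \eqref{CGC}. Consequently there is no genuine obstacle beyond bookkeeping; the one point that warrants care is matching the parameter conventions so that the $\lambda_1$-dependent fourth parameter $\lambda_1\epsilon_2$ carried by the Specialized Chihara factor $P_{n_2}$ on the right is consistent with the argument $\epsilon_2\lambda_1/\lambda_2$ and parameter $-c\epsilon_1\epsilon_2/\lambda_2$ of the Big $-1$ Jacobi polynomial on the left, both features tracing back to the twisted coproduct \eqref{coproduct}.
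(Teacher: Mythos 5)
Your proposal is correct and follows exactly the paper's own proof: apply the unitary operator $\Upsilon$ to both sides of the Clebsch-Gordan decomposition \eqref{CGDecomp}, evaluate the coupled side via \eqref{UpsiloneN} and \eqref{Upsilone0}, and the uncoupled side via \eqref{Upsilon} together with the dual $-1$ Hahn expression \eqref{CGC} for the Clebsch-Gordan coefficients. The additional observations you make (finiteness of the sum and parameter bookkeeping) are sound but not points where your argument diverges from the paper's.
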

    \begin{proof}
    Consider the Clebsch-Gordan decomposition \eqref{CGDecomp} and apply the operator $\Upsilon$ on each side of the equation. Using \eqref{Upsilon}, \eqref{UpsiloneN} and \eqref{Upsilone0}, one obtains the above formula. 
    \end{proof}
    \begin{Remark}
     It is also possible to obtain another convolution identity using the inverse basis expansion :
    \begin{align} \label{conv_id_CG2}
      P_{n_1}(\lambda_1;\mu_1,c\epsilon_1) &P_{n_2}(\lambda_2;\mu_2,\lambda_1\epsilon_2) \notag\\ 
      &= \sum_{N+j=n_1+n_2} (-1)^{\phi(n_1,n_2,j)} \left(\frac{\epsilon_2}{2}\right)^{n_1} \sqrt{\frac{[n_2]_{\mu_2}!\ \rho_{j}(\mu_2,\mu_1,n_1+n_2)}{[n_1]_{\mu_1}![n_1+n_2]_{\mu_2}!\ \nu_0(\mu_2,\mu_1,n_1+n_2)}}\ \\[0.5em] \notag
      & \times R_{n_1}(z_j;\mu_2,\mu_1,n_1+n_2)K_j(\lambda_2;\mu_2,\mu_1;c)\ P_N(\lambda_2;\mu_{12},c\epsilon_{12}) J_j\left( \frac{\epsilon_2\lambda_1}{\lambda_2};\ 2\mu_2, 2\mu_1, -\frac{c\epsilon_1\epsilon_2}{\lambda_2} \right).
    \end{align}
    \end{Remark}

    \begin{Remark}
     Note that the propositions \ref{prop2} and \ref{prop3} both define polynomials in two variables $\lambda_1, \lambda_2$ of the Tratnik-type that are orthogonal with respect to the same measure $W(\lambda_1,\mu_1,c\epsilon_1)W(\lambda_2,\mu_2,\lambda_1\epsilon_2)$ on $G$. In this picture, the convolution identities \eqref{conv_id_CG} and \eqref{conv_id_CG2} provide connection coefficients for these two sets of orthogonal polynomials. This is an interesting result especially since the generalization to multiple variables of the Bannai-Ito scheme is still in its early stages. Note that section 4.3 similarly gives orthogonal polynomials in three variables in this class.  
    \end{Remark}

  \subsection{Action on the three-fold tensor product space}

    We now study how the operator $X_c$ can be extended to the three-fold tensor product space $(\mu_1,\epsilon_1)\otimes(\mu_2,\epsilon_2)\otimes(\mu_3,\epsilon_3)$. This will lead to another convolution identity involving the Racah coefficients of $\mathfrak{osp}(1|2)$. One first computes 
    \begin{align}
    \begin{aligned}
    \Delta^2(X_c)\equiv (1\otimes \Delta)\Delta(X_c) &= \Delta(X_c)\otimes R + \Delta(1) \otimes X_0 \\
      &= X_c \otimes R \otimes R + 1 \otimes X_0 \otimes R + 1 \otimes 1 \otimes X_0.
    \end{aligned}
    \end{align}
    The notation $\Delta^2$ is unambiguous because of the coassociativity of the coproduct : $(1\otimes \Delta)\Delta= (\Delta\otimes 1)\Delta$.
    The eigenvectors of $\Delta^2(X_c)$ can be found by studying its action on vectors of the form $v_{\lambda_1,\lambda_2}^c \otimes e_{n_3}$ :
    \begin{align}
    \begin{aligned}
    \Delta^2(X_c) v_{\lambda_1,\lambda_2}^c \otimes e_{n_3} &= (\Delta(X_c)\otimes R + \Delta(1) \otimes X_0)v_{\lambda_1,\lambda_2}^c \otimes e_{n_3} \\
      &= (1\otimes1\otimes \lambda_2 R + 1\otimes1\otimes X_0)v_{\lambda_1,\lambda_2}^c \otimes e_{n_3} \\
      &= (1\otimes1\otimes X_{\lambda_2})v_{\lambda_1,\lambda_2}^c \otimes e_{n_3} \\
      &= v_{\lambda_1,\lambda_2}^c \otimes X_{\lambda_2}e_{n_3}.
    \end{aligned}
    \end{align}
    It follows naturally that the generalized eigenvectors are
    \begin{align}
    v_{\lambda_1,\lambda_2,\lambda_3}^c = \sum_{n_1,n_2,n_3=0}^\infty P_{n_1}(\lambda_1;\mu_1,c\epsilon_1) P_{n_2}(\lambda_2;\mu_2,\lambda_1\epsilon_2)P_{n_3}(\lambda_3;\mu_3,\lambda_2\epsilon_3)\ e_{n_1}^{(\mu_1,\epsilon_1)}\otimes e_{n_2}^{(\mu_2,\epsilon_2)}\otimes e_{n_3}^{(\mu_3,\epsilon_3)}
    \end{align}
    with eigenvalues $\lambda_3$. This establishes the analog of propositions \ref{prop1} and \ref{prop2}.
    \begin{proposition} \label{prop4}
    The unitary operator
    \begin{align} \label{Theta}
    \Theta :\quad \ell^2(\mathbb{Z_+})\otimes\ell^2(\mathbb{Z_+})\otimes\ell^2(\mathbb{Z_+}) &\to L^2(G^{(3)},W^{(3)}), \\
		    e_{n_1}^{(\mu_1,\epsilon_1)}\otimes e_{n_2}^{(\mu_2,\epsilon_2)}\otimes e_{n_3}^{(\mu_3,\epsilon_3)} \ \ &\mapsto P_{n_1}(\lambda_1;\mu_1,c\epsilon_1) P_{n_2}(\lambda_2;\mu_2,\lambda_1\epsilon_2)P_{n_3}(\lambda_3;\mu_3,\lambda_2\epsilon_3)
    \end{align}
    where 
    \[ G^{(3)} = \{(\lambda_1,\lambda_2,\lambda_3)\in\mathbb{R} ~\big|~ |\lambda_3|>|\lambda_2|>|\lambda_1|>|c|\} \] 
    and
    \[  W^{(3)} = W(\lambda_1,\mu_1,c\epsilon_1)W(\lambda_2,\mu_2,\lambda_1\epsilon_2)W(\lambda_3,\mu_3,\lambda_2\epsilon_3).  \]
    is an intertwiner of the operator $M_{\lambda_3}$ on $L^2(G^{(3)},W^{(3)})$ and the operator $\Delta^2(X_c)$ acting in $\ell^2(\mathbb{Z_+})\otimes\ell^2(\mathbb{Z_+})\otimes\ell^2(\mathbb{Z_+})$ :
    \begin{align} \label{Intertwine3}
    M_{\lambda_3} \Theta = \Theta \Delta^2(X_c).
    \end{align}
    \end{proposition}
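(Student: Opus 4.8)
The plan is to follow the template of the proofs of Propositions~\ref{prop1} and~\ref{prop2}, since the substantive computations have already been carried out in the text preceding the statement. Two things must be verified: that $\Theta$ is unitary, and that the intertwining relation~\eqref{Intertwine3} holds. The second is essentially immediate from the eigenvector calculation above, so the real content lies in the unitarity.

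For unitarity I would argue that $\Theta$ sends the orthonormal basis $e_{n_1}^{(\mu_1,\epsilon_1)}\otimes e_{n_2}^{(\mu_2,\epsilon_2)}\otimes e_{n_3}^{(\mu_3,\epsilon_3)}$ of $\ell^2(\mathbb{Z_+})^{\otimes 3}$ onto an orthonormal basis of $L^2(G^{(3)},W^{(3)})$, exploiting the nested product structure of $W^{(3)}$. Concretely, I would compute the inner product of two image functions by iterated integration in the order $\lambda_3,\lambda_2,\lambda_1$. Integrating first over $\lambda_3$: for fixed $\lambda_1,\lambda_2$ the factor $P_{n_3}(\lambda_3;\mu_3,\lambda_2\epsilon_3)$ has $\gamma$-parameter $\lambda_2\epsilon_3$ with $|\lambda_2\epsilon_3|=|\lambda_2|$, so by \eqref{OpeghOR} these polynomials are orthonormal with respect to $W(\lambda_3,\mu_3,\lambda_2\epsilon_3)$ on the interval $|\lambda_3|>|\lambda_2|$, producing $\delta_{n_3,m_3}$. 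Integrating next over $\lambda_2$ on $|\lambda_2|>|\lambda_1|$ (now $\gamma=\lambda_1\epsilon_2$) yields $\delta_{n_2,m_2}$, and finally over $\lambda_1$ on $|\lambda_1|>|c|$ (with $\gamma=c\epsilon_1$) yields $\delta_{n_1,m_1}$. The three successive orthogonality intervals compose exactly into $G^{(3)}=\{|\lambda_3|>|\lambda_2|>|\lambda_1|>|c|\}$, so the product polynomials are orthonormal and $\Theta$ is an isometry onto its image. The intertwining relation then follows by reading the eigenvector computation above in the image: the generalized eigenvectors $v_{\lambda_1,\lambda_2,\lambda_3}^c$ carry eigenvalue $\lambda_3$ under $\Delta^2(X_c)$, so the tridiagonal action of $\Delta^2(X_c)$ on the basis — which raises and lowers $n_3$ through the operator $X_{\lambda_2}$ on the third factor — is transported by $\Theta$ into the Specialized Chihara three-term recurrence in the variable $\lambda_3$, i.e.\ into multiplication by $\lambda_3$. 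This is precisely $M_{\lambda_3}\Theta=\Theta\Delta^2(X_c)$.

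The only genuine obstacle worth flagging is \emph{completeness}: one must know that the product polynomials span all of $L^2(G^{(3)},W^{(3)})$ and not merely a proper closed subspace, for otherwise $\Theta$ would be isometric but not surjective. This is the same determinacy issue that appeared in the proof of Proposition~\ref{prop3}. At each level the relevant one-dimensional moment problem is determinate because $\sum_{n} [n]_\mu^{-1/2}$ diverges, which verifies Carleman's condition; applying this successively in $\lambda_1$, then $\lambda_2$, then $\lambda_3$ shows the nested family is complete. Granting this point, $\Theta$ is onto and hence unitary, completing the proof.
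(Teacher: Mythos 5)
Your proposal is correct and takes essentially the same route as the paper, whose proof simply notes that unitarity follows from mapping an orthonormal basis onto another one and that the intertwining relation is a restatement of the computation of the generalized eigenvectors $v_{\lambda_1,\lambda_2,\lambda_3}^c$ given just before the proposition. Your extra detail — the iterated integration in the order $\lambda_3,\lambda_2,\lambda_1$ against the nested weight $W^{(3)}$, and the Carleman/determinacy argument for completeness of the image — merely fills in steps the paper leaves implicit, consistent with how determinacy is already invoked in the proof of Proposition~\ref{prop3}.
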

    \begin{proof}
    The proof follows those of propositions \ref{prop1} and \ref{prop2}. Unitarity follows from mapping an orthonormal basis onto another one. The intertwining relation comes from the computation of the generalized eigenvectors before proposition \ref{prop4}.
    \end{proof}

    The idea is again to act with the operator $\Theta$ in different bases in order to obtain a new convolution identity. The bases of interest here are the two that arise when decomposing the representation space $(\mu_{1},\epsilon_{1})\otimes(\mu_{2},\epsilon_{2})\otimes(\mu_{3},\epsilon_{3})$ into irreducible components. As mentioned in section 3, this can be done in two standard ways using the Clebsch-Gordan decomposition. This yields the bases $f_{n_{123}}^{j_{123},j_{12}}$ and $g_{n_{123}}^{j_{123},j_{23}}$ respectively given in \eqref{Rbasis1} and \eqref{Rbasis2}.

    \begin{proposition} \label{prop5}
    In $L^2(G^{(3)},W^{(3)})$, we have 
    \begin{align}
    \Theta f_{n_{123}}^{j_{123},j_{12}} =~  &K_{j_{12}}(\lambda_2;\mu_2,\mu_1;c)~    J_{j_{12}}\left(\frac{\epsilon_2\lambda_1}{\lambda_2};~2\mu_2, 2\mu_1,-\frac{c\epsilon_1\epsilon_2}{\lambda_2} \right) \\
				    \times &K_{j_{(12)3}}(\lambda_3;\mu_3,\mu_{12};c)~J_{j_{(12)3}}\left(\frac{\epsilon_3\lambda_2}{\lambda_3};~2\mu_3, 2\mu_{12},-\frac{c\epsilon_{12}\epsilon_3}{\lambda_3} \right) 
				    P_{n_{123}}(\lambda_3; \mu_{123},c\epsilon_{123}),\notag \\[1em]
    \Theta g_{n_{123}}^{j_{123},j_{23}} =~  &K_{j_{23}}(\lambda_3;\mu_3,\mu_2;\lambda_1)~    J_{j_{23}}\left(\frac{\epsilon_3\lambda_2}{\lambda_3};~2\mu_3, 2\mu_2,-\frac{\lambda_1\epsilon_2\epsilon_3}{\lambda_3} \right) \\
				    \times &K_{j_{1(23)}}(\lambda_3;\mu_{23},\mu_{1};c)~J_{j_{1(23)}}\left(\frac{\epsilon_{23}\lambda_1}{\lambda_3};~2\mu_{23}, 2\mu_{1},-\frac{c\epsilon_{1}\epsilon_{23}}{\lambda_3} \right) 
				    P_{n_{123}}(\lambda_3; \mu_{123},c\epsilon_{123}).\notag
    \end{align}
    \end{proposition}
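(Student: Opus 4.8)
The plan is to derive both formulas by iterating the convolution identity of Proposition \ref{convid1} twice, exploiting the nested Clebsch--Gordan structure of the bases $f$ and $g$. For the first identity I would start from the fully expanded definition \eqref{Rbasis1} of $f_{n_{123}}^{j_{123},j_{12}}$ and apply $\Theta$ termwise: by Proposition \ref{prop4} each uncoupled vector $e_{n_1}^{(\mu_1,\epsilon_1)}\otimes e_{n_2}^{(\mu_2,\epsilon_2)}\otimes e_{n_3}^{(\mu_3,\epsilon_3)}$ is sent to $P_{n_1}(\lambda_1;\mu_1,c\epsilon_1)\,P_{n_2}(\lambda_2;\mu_2,\lambda_1\epsilon_2)\,P_{n_3}(\lambda_3;\mu_3,\lambda_2\epsilon_3)$. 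Inserting \eqref{Rbasis1} then produces a double sum weighted by the product $C_{n_{12},n_3}^{n_{123},j_{(12)3}}C_{n_1,n_2}^{n_{12},j_{12}}$, with the nested summation ranges indicated there.

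The inner sum over $n_1,n_2$ at fixed $n_{12}$ is exactly the right-hand side of \eqref{conv_id_CG} for the coupling $(\mu_1,\epsilon_1)\otimes(\mu_2,\epsilon_2)$, so it collapses to $K_{j_{12}}(\lambda_2;\mu_2,\mu_1;c)\,J_{j_{12}}\left(\tfrac{\epsilon_2\lambda_1}{\lambda_2};2\mu_2,2\mu_1,-\tfrac{c\epsilon_1\epsilon_2}{\lambda_2}\right)P_{n_{12}}(\lambda_2;\mu_{12},c\epsilon_{12})$. The crucial point is that the surviving pair $P_{n_{12}}(\lambda_2;\mu_{12},c\epsilon_{12})$ and $P_{n_3}(\lambda_3;\mu_3,\lambda_2\epsilon_3)$ has precisely the product form required by \eqref{conv_id_CG} for a two-fold coupling $(\mu_{12},\epsilon_{12})\otimes(\mu_3,\epsilon_3)$ — first-factor parameter $c\epsilon_{12}$, second-factor parameter $\lambda_2\epsilon_3$ — with $\lambda_2$ in the role of the first variable and $\lambda_3$ in that of the second. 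A second application of \eqref{conv_id_CG}, now with the substitution $(\mu_1,\epsilon_1,\lambda_1)\mapsto(\mu_{12},\epsilon_{12},\lambda_2)$ and $(\mu_2,\epsilon_2,\lambda_2)\mapsto(\mu_3,\epsilon_3,\lambda_3)$, collapses the remaining sum to $K_{j_{(12)3}}(\lambda_3;\mu_3,\mu_{12};c)\,J_{j_{(12)3}}(\cdots)\,P_{n_{123}}(\lambda_3;\mu_{123},c\epsilon_{123})$, which establishes the first formula.

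For $g_{n_{123}}^{j_{123},j_{23}}$ I would proceed identically from \eqref{Rbasis2}, collapsing first the inner sum over $n_2,n_3$. The one point demanding care is that in $\Theta$ the second-factor polynomial carries parameter $\lambda_1\epsilon_2$ rather than $c\epsilon_2$; consequently the coupling $(\mu_2,\epsilon_2)\otimes(\mu_3,\epsilon_3)$ must be treated with the effective parameter $c\mapsto\lambda_1$, so \eqref{conv_id_CG} produces $K_{j_{23}}(\lambda_3;\mu_3,\mu_2;\lambda_1)$, a Big $\m1$ Jacobi polynomial with third argument $-\lambda_1\epsilon_2\epsilon_3/\lambda_3$, and the residual factor $P_{n_{23}}(\lambda_3;\mu_{23},\lambda_1\epsilon_{23})$. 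This residual factor carries exactly the parameter $\lambda_1\epsilon_{23}$ needed for the outer coupling $(\mu_1,\epsilon_1)\otimes(\mu_{23},\epsilon_{23})$, in which the first factor again has parameter $c\epsilon_1$; a second application of \eqref{conv_id_CG} then yields $K_{j_{1(23)}}(\lambda_3;\mu_{23},\mu_1;c)\,J_{j_{1(23)}}(\cdots)\,P_{n_{123}}(\lambda_3;\mu_{123},c\epsilon_{123})$, giving the second formula.

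The main obstacle is the bookkeeping of parameters across the two stages. One must verify at each step that the residual Specialized Chihara factor emerging from the first convolution carries exactly the parameter ($c\epsilon_{12}$ for $f$, $\lambda_1\epsilon_{23}$ for $g$) that the second application of the convolution identity requires, and that the ``$c$''-slot of every $K$-factor and every Big $\m1$ Jacobi argument inherits the correct value ($c$ or $\lambda_1$) dictated by the coproduct structure \eqref{coidealP}. Once this matching is confirmed, both identities follow from two successive applications of Proposition \ref{convid1}.
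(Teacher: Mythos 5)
Your proposal is correct and takes essentially the same route as the paper's own proof: expand $f_{n_{123}}^{j_{123},j_{12}}$ and $g_{n_{123}}^{j_{123},j_{23}}$ in the uncoupled basis via \eqref{Rbasis1} and \eqref{Rbasis2}, apply $\Theta$ termwise using Proposition \ref{prop4}, and collapse the nested sums by two successive applications of the convolution identity \eqref{conv_id_CG}. Your parameter bookkeeping --- in particular that the inner coupling for $g$ must be performed with $c\mapsto\lambda_1$, producing $K_{j_{23}}(\lambda_3;\mu_3,\mu_2;\lambda_1)$ and the residual factor $P_{n_{23}}(\lambda_3;\mu_{23},\lambda_1\epsilon_{23})$ --- is exactly what the paper's (much terser) proof relies on.
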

    \begin{proof}
    The key to obtain the action of $\Theta$ on these two bases is to use the expansions \eqref{Rbasis1} and \eqref{Rbasis2} in terms of the basis $e_{n_{1}}^{(\mu_{1},\epsilon_{1})}\otimes e_{n_{2}}^{(\mu_{2},\epsilon_{2})}\otimes e_{n_{3}}^{(\mu_{3},\epsilon_{3})}$, act with $\Theta$ as per proposition \ref{prop4} and resum the resulting polynomials by using the convolution identity \eqref{conv_id_CG} twice. Using the notation of proposition \ref{prop3} and of section 2, one obtains the relations above.
    \end{proof}

    This leads to the following result.

    \begin{proposition}\label{convid2} 
    The convolution identity
    \begin{align}
    &\notag K_{j_{12}}(\lambda_2;\mu_2,\mu_1;c)  J_{j_{12}}\left(\tfrac{\epsilon_2\lambda_1}{\lambda_2};~2\mu_2, 2\mu_1,-\tfrac{c\epsilon_1\epsilon_2}{\lambda_2} \right)     K_{j_{(12)3}}(\lambda_3;\mu_3,\mu_{12};c)J_{j_{(12)3}}\left(\tfrac{\epsilon_3\lambda_2}{\lambda_3};~2\mu_3, 2\mu_{12},-\tfrac{c\epsilon_{12}\epsilon_3}{\lambda_3} \right)\\
    &= \sum_{j_{23}=0}^{j_{123}} (-1)^\varphi \epsilon_3^{j_{12}} \sqrt{\frac{w_{j_{23}}}{h_{j_{12}}}}~ B_{j_{12}}\left(x_{j_{23}};\tfrac{\mu_2+\mu_3}{2},\tfrac{\mu_1+(-1)^{j_{123}}\mu_{123}}{2},\tfrac{\mu_3-\mu_2}{2},\tfrac{(-1)^{j_{123}}\mu_{123}-\mu_1}{2}\right) \\
    &\notag \times K_{j_{23}}(\lambda_3;\mu_3,\mu_2;\lambda_1)  J_{j_{23}}\left(\tfrac{\epsilon_3\lambda_2}{\lambda_3};~2\mu_3, 2\mu_2,-\tfrac{\lambda_1\epsilon_2\epsilon_3}{\lambda_3} \right)  K_{j_{1(23)}}(\lambda_3;\mu_{23},\mu_{1};c) J_{j_{1(23)}}\left(\tfrac{\epsilon_{23}\lambda_1}{\lambda_3};~2\mu_{23}, 2\mu_{1},-\tfrac{c\epsilon_{1}\epsilon_{23}}{\lambda_3} \right)
    \end{align}
    holds with the relations \eqref{jrelations} between the $j_{12}, j_{23}, j_{(12)3}, j_{1(23)}, j_{123}$, the notation for the polynomials of section 2 and equation \eqref{KN}.
    \end{proposition}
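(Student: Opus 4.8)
The plan is to combine the explicit action of $\Theta$ on the two coupled bases, computed in Proposition \ref{prop5}, with the Racah recoupling relation \eqref{RDecomp} that links these bases. First I would take \eqref{RDecomp}, namely
\begin{align*}
f_{n_{123}}^{j_{123},j_{12}} = \sum_{j_{23}=0}^{j_{123}} \mathcal{R}_{j_{12},j_{23},j_{123}}^{\mu_1,\mu_2,\mu_3}\, g_{n_{123}}^{j_{123},j_{23}},
\end{align*}
and apply the operator $\Theta$ to both sides, using its linearity. By Proposition \ref{prop5}, the left-hand side becomes the product of $K_{j_{12}}$, $K_{j_{(12)3}}$, two Big $-1$ Jacobi polynomials, and the Specialized Chihara factor $P_{n_{123}}(\lambda_3;\mu_{123},c\epsilon_{123})$, while each summand on the right-hand side becomes the analogous product built from $K_{j_{23}}$, $K_{j_{1(23)}}$, two Big $-1$ Jacobi polynomials, and \emph{the same} factor $P_{n_{123}}(\lambda_3;\mu_{123},c\epsilon_{123})$.

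The next step is to substitute the explicit form of the Racah coefficient $\mathcal{R}_{j_{12},j_{23},j_{123}}^{\mu_1,\mu_2,\mu_3}$ recorded in section 3, which realizes it as a Bannai-Ito polynomial $B_{j_{12}}$ evaluated on the grid point $x_{j_{23}}$, carrying the phase $(-1)^{\varphi}$, the sign $\epsilon_3^{j_{12}}$, and the weight ratio $\sqrt{w_{j_{23}}/h_{j_{12}}}$, with the parameters listed in the proposition. Because $P_{n_{123}}(\lambda_3;\mu_{123},c\epsilon_{123})$ appears as a common factor on both sides — it is the shared image of the highest-weight direction under $\Theta$, independent of the coupling scheme — it can be cancelled, leaving precisely the stated identity. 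All the real analytic work has already been absorbed into Proposition \ref{prop5} (which itself invokes the two-fold convolution identity \eqref{conv_id_CG} twice) and into the Bannai-Ito realization of the Racah coefficients, so this final step is essentially an algebraic rearrangement.

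The main thing to verify carefully is the legitimacy of the cancellation together with the bookkeeping of parameters. Since $P_{n_{123}}(\lambda_3;\mu_{123},c\epsilon_{123})$ is a nontrivial orthogonal polynomial, hence a nonzero function of $\lambda_3$, dividing both sides by it is valid as an identity of functions on $G^{(3)}$. I would then confirm that the orthogonality data $x_k, w_k, h_n$ entering $\mathcal{R}$ are evaluated at the parameters $\rho_1,\rho_2,r_1,r_2$ dictated by $B_{j_{12}}$ in the proposition, and that the constraints \eqref{jrelations} among $j_{12}, j_{23}, j_{(12)3}, j_{1(23)}, j_{123}$ are respected throughout, as these control both the summation range and the arguments $\mu_{12},\mu_{23},\epsilon_{12},\epsilon_{23}$ appearing in the $K$ factors of \eqref{KN} and in the Big $-1$ Jacobi factors. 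No genuine obstacle is expected beyond this matching of indices and parameters.
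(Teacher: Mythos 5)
Your proof is correct and follows essentially the same route as the paper's own argument: apply $\Theta$ to both sides of the Racah decomposition \eqref{RDecomp}, invoke Proposition \ref{prop5} together with the Bannai--Ito expression for the Racah coefficients, and cancel the common factor $P_{n_{123}}(\lambda_3;\mu_{123},c\epsilon_{123})$. Your explicit justification of that cancellation (nonvanishing of the Specialized Chihara factor as a function of $\lambda_3$) is a welcome refinement of what the paper simply attributes to the Wigner--Eckart theorem.
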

    \begin{proof}
    This formula is obtained by acting with $\Theta$ on both sides of the Racah decomposition \eqref{RDecomp}. The factors $P_{n_{123}}(\lambda_3; \mu_{123},c\epsilon_{123})$ on the left and on the right cancel out. This is just the manifestation of the well-known Wigner-Eckart theorem in this context.
    \end{proof}

    \begin{Remark}
    It is also possible to obtain a similar convolution identity using the orthogonality of the Racah coefficients :
    \begin{align}
    &\notag K_{j_{23}}(\lambda_3;\mu_3,\mu_2;\lambda_1)   J_{j_{23}}\left(\tfrac{\epsilon_3\lambda_2}{\lambda_3};~2\mu_3, 2\mu_2,-\tfrac{\lambda_1\epsilon_2\epsilon_3}{\lambda_3} \right)   K_{j_{1(23)}}(\lambda_3;\mu_{23},\mu_{1};c) J_{j_{1(23)}}\left(\tfrac{\epsilon_{23}\lambda_1}{\lambda_3};~2\mu_{23}, 2\mu_{1},-\tfrac{c\epsilon_{1}\epsilon_{23}}{\lambda_3} \right)\\
    &= \sum_{j_{12}=0}^{j_{123}} (-1)^\varphi \epsilon_3^{j_{12}} \sqrt{\frac{w_{j_{23}}}{h_{j_{12}}}}~ B_{j_{12}}\left(x_{j_{23}};\tfrac{\mu_2+\mu_3}{2},\tfrac{\mu_1+(-1)^{j_{123}}\mu_{123}}{2},\tfrac{\mu_3-\mu_2}{2},\tfrac{(-1)^{j_{123}}\mu_{123}-\mu_1}{2}\right) \\
    &\notag \times K_{j_{12}}(\lambda_2;\mu_2,\mu_1;c)    J_{j_{12}}\left(\tfrac{\epsilon_2\lambda_1}{\lambda_2};~2\mu_2, 2\mu_1,-\tfrac{c\epsilon_1\epsilon_2}{\lambda_2} \right)     K_{j_{(12)3}}(\lambda_3;\mu_3,\mu_{12};c) J_{j_{(12)3}}\left(\tfrac{\epsilon_3\lambda_2}{\lambda_3};~2\mu_3, 2\mu_{12},-\tfrac{c\epsilon_{12}\epsilon_3}{\lambda_3} \right)
    \end{align}
    where the relations \eqref{jrelations} and the same notation are still assumed.
    \end{Remark}

\section{Bilinear generating function}

In this section, we consider a realization of $\mathfrak{osp}(1|2)$ in terms of Dunkl operators. This leads to a generating function for the Specialized Chihara polynomials. Additionally, the convolution identity from proposition \ref{convid1} is used to derive a bilinear generating function for the Big \m1 Jacobi polynomials.   

We introduce the following realization in terms of Dunkl operators of the $\mathfrak{osp}(1|2)$ Lie superalgebra :
\begin{align} \label{realn}
 J_+ = z, \quad
 J_- = \partial_z + \frac{\mu}{z}(1-R_z),\quad
 J_0 = z\partial_z + \mu + \tfrac12,\quad
 R = R_z
\end{align}
where $R_z$ is just the the reflexion operator acting on the variable $z$ by $R_z f(z)=f(-z)$. These operators verify the relations \eqref{osp12Action} when acting on the orthonormal basis vectors $e_n^{(\mu,\epsilon)} = ([n]_\mu!)^{-1/2}~ z^n$. 

We shall make use of this model to derive generating functions for the Specialized Chihara polynomials $P_n(\lambda;\mu,c\epsilon)$. First, recall equation \eqref{XcEV} giving the generalized eigenvectors of the operator $X_c$ and insert the realization \eqref{realn} above to obtain 
\begin{align} \label{5p2}
 v_\lambda^c(z,\mu,\epsilon) = \sum_{n=0}^\infty P_n(\lambda;\mu,c\epsilon)~ \frac{z^n}{[n]_\mu^{1/2}!}.
\end{align}
If one can obtain an explicit formula for $v_\lambda^c(z,\mu,\epsilon)$ in terms of special functions, this equation will yield the desired generating function. This can be done through the following steps. First, split the sum in the RHS over the even and the odd values of $n$ :
\begin{align} \label{splitpar}
 v_\lambda^c(z,\mu,\epsilon) = \sum_{k=0}^\infty P_{2k}(\lambda;\mu,c\epsilon)~ \frac{z^{2k}}{[2k]_\mu^{1/2}!} + \sum_{k=0}^\infty P_{2k+1}(\lambda;\mu,c\epsilon)~ \frac{z^{2k+1}}{[2k+1]_\mu^{1/2}!}.
\end{align}
Then, rewrite the $\mu$-factorial in terms of Pochhammer symbols with 
\begin{align}
 [2k]_\mu! = 4^k k!(\mu+\tfrac12)_k, \qquad [2k+1]_\mu! = 2(\mu+\tfrac12) 4^k k!(\mu+\tfrac32)_k 
\end{align}
and express the polynomials $P_{2k}(\lambda)$ and $P_{2k+1}(\lambda)$ in terms of Laguerre polynomials using equation \eqref{ExplicitOpegh} to get
\begin{align}
 v_\lambda^c(z,\mu,\epsilon) = \sum_{k=0}^\infty \frac{(-\tfrac12 z^2)^k}{(\mu+\tfrac12)_k} L_k^{(\mu-\frac12)}\left(\frac{\lambda^2-c^2}{2}\right)~
 + ~\frac{z(\lambda - c\epsilon)}{2\mu+1}~ \sum_{k=0}^\infty \frac{(-\tfrac12 z^2)^k}{(\mu+\tfrac32)_k} L_k^{(\mu+\frac12)}\left(\frac{\lambda^2-c^2}{2}\right).
\end{align}
Now, each sum can be reframed in terms of hypergeometric functions with the help of a generating function for the Laguerre polynomials (see equation 1.11.11 of \cite{Koekoek2010}) :
\begin{align}
 \sum_{n=0}^\infty \frac{t^n}{(\alpha+1)_n} L_n^{(\alpha)}(x) = e^t ~ \pFq{0}{1}{-}{\alpha+1}{-xt}.
\end{align}
Inserting the result into equation \eqref{5p2} leads to the following generating function for the Specialized Chihara polynomials.
\begin{proposition} \label{prop6}
The Specialized Chihara polynomials possess the generating function
\begin{align} 
\sum_{n=0}^\infty P_n(\lambda;\mu,c\epsilon)~ \frac{z^n}{[n]_\mu^{1/2}!}
 = e^{-z^2/2} \left( \pFq{0}{1}{-}{\mu+\tfrac12}{\frac{z^2 (\lambda^2 - c^2)}{4}}  +   \frac{z(\lambda-c\epsilon)}{2\mu+1} \pFq{0}{1}{-}{\mu+\tfrac32}{\frac{z^2 (\lambda^2 - c^2)}{4}} \right). 
\end{align}
\end{proposition}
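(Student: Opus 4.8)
The plan is to recognize that the left-hand side of the stated identity is exactly the formal eigenvector series \eqref{5p2} obtained by inserting the Dunkl realization \eqref{realn} into \eqref{XcEV}, and then to evaluate this series in closed form by reducing everything to a single known Laguerre generating function. Because the Specialized Chihara polynomials have genuinely different even- and odd-degree expressions in terms of Laguerre polynomials (see \eqref{ExplicitOpegh}), the first step is to split the sum in \eqref{5p2} according to the parity of $n$, as in \eqref{splitpar}, and to treat the two parities independently.

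For the even part, I would substitute the explicit formula for $P_{2k}(\lambda;\mu,c\epsilon)$ from \eqref{ExplicitOpegh} (with $\gamma=c\epsilon$, so $\gamma^2=c^2$) and rewrite the square-rooted $\mu$-factorial $[2k]_\mu^{1/2}!$ using the Pochhammer reduction $[2k]_\mu!=4^k k!\,(\mu+\tfrac12)_k$. The decisive simplification is that the Gamma-function ratio $\Gamma(\mu+\tfrac12)/\Gamma(k+\mu+\tfrac12)=1/(\mu+\tfrac12)_k$ under the square root in $P_{2k}$ pairs with the $\sqrt{k!\,(\mu+\tfrac12)_k}$ coming from the factorial so that all radicals cancel, while the sign $(-1)^k$ merges with $z^{2k}/2^k$ to produce the coefficient $(-\tfrac12 z^2)^k/(\mu+\tfrac12)_k$ multiplying $L_k^{(\mu-\frac12)}\bigl(\tfrac{\lambda^2-c^2}{2}\bigr)$. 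The odd part proceeds identically using $[2k+1]_\mu!=2(\mu+\tfrac12)\,4^k k!\,(\mu+\tfrac32)_k$ and the shifted Laguerre index $\mu+\tfrac12$; here the factor $\sqrt{2\mu+1}$ from the factorial and the prefactor $(\lambda-\gamma)/\sqrt{2\mu+1}$ in $P_{2k+1}$ combine to pull out the overall $z(\lambda-c\epsilon)/(2\mu+1)$.

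At this point each parity sum has the canonical shape $\sum_k t^k L_k^{(\alpha)}(x)/(\alpha+1)_k$, and I would finish by invoking the standard Laguerre generating function $\sum_n t^n L_n^{(\alpha)}(x)/(\alpha+1)_n = e^t\,{}_0F_1(-;\alpha+1;-xt)$ with $t=-\tfrac12 z^2$ and $x=\tfrac{\lambda^2-c^2}{2}$, which yields the common prefactor $e^{-z^2/2}$ and the two confluent series of argument $\tfrac{z^2(\lambda^2-c^2)}{4}$; adding the even and odd contributions reproduces the claimed formula. The main obstacle is purely bookkeeping: correctly tracking the square-root normalizations of both the Laguerre polynomials and the $\mu$-factorials and confirming their exact cancellation. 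Once that is verified, the only external input is the Laguerre generating function, and convergence is immediate since both resulting series are confluent.
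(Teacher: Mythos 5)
Your proposal is correct and follows exactly the paper's own derivation: split the eigenvector series \eqref{5p2} by parity as in \eqref{splitpar}, insert the Laguerre expressions \eqref{ExplicitOpegh} together with the Pochhammer reductions $[2k]_\mu!=4^k k!(\mu+\tfrac12)_k$ and $[2k+1]_\mu!=2(\mu+\tfrac12)4^k k!(\mu+\tfrac32)_k$, and apply the Laguerre generating function ${}_0F_1$ identity with $t=-\tfrac12z^2$. Your bookkeeping of the square-root cancellations (including the pairing of $\sqrt{2\mu+1}$ factors in the odd part and the use of $\epsilon^2=1$ to write $\gamma^2=c^2$) is accurate, so there is nothing to correct.
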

\begin{proof}
 This result follows from the preceding computation.
\end{proof}

\begin{Remark}
It is also possible to reexpress the hypergeometric functions in terms of the modified Bessel functions of the first type $I_\alpha(x)$ to obtain 
\begin{align} 
\sum_{n=0}^\infty P_n(\lambda;\mu,c\epsilon)~ \frac{z^n}{[n]_\mu^{1/2}!}
 = \frac{e^{-z^2/2} \Gamma(\mu-\tfrac12)}{(\tfrac{z}{2})^{\mu-1/2}(\lambda^2 - c^2)^{\frac{\mu}{2}+\frac14}} \left[ (\lambda^2 - c^2)^{\frac14} I_{\mu-\frac12}(z{\scriptstyle \sqrt{\lambda^2-c^2}})  +  \tfrac{2\mu-1}{2\mu+1}~ I_{\mu+\frac12}(z{\scriptstyle \sqrt{\lambda^2-c^2}}) \right].
\end{align}
\end{Remark}
\begin{Remark}
Note that this explicit expression for the generalized eigenvectors $v_\lambda^c(z,\mu,\epsilon)$ in terms of special functions in the realization \eqref{realn} can also be obtained by solving the difference-differential equation $X_c v_\lambda^c(z,\mu,\epsilon) = \lambda v_\lambda^c(z,\mu,\epsilon)$. This also requires to separate the function $v_\lambda^c(z,\mu,\epsilon)$ into an even and odd part and leads to a system of two coupled first order ordinary differential equations. 
\end{Remark}

Focusing now on obtaining a bilinear generating function, we consider how the generating function given above carries to the tensor product of representations. In fact, the extension of the generalized eigenvectors to the tensor product of two representations in the realization \eqref{realn} is immediate. Explicitly, the generalized eigenvectors of $\Delta^2(X_c)$ are
\begin{align}
 v_{\lambda_1}^c(z_1,\mu_1,\epsilon_1)v_{\lambda_2}^{\lambda_1}(z_2,\mu_2,\epsilon_2) = \sum_{n_1, n_2=0}^\infty P_{n_1}(\lambda_1;\mu_1,c\epsilon_1) P_{n_2}(\lambda_2;\mu_2,\lambda_1\epsilon_2) \frac{z_1^{n_1}z_2^{n_2}}{[n_1]_{\mu_1}^{1/2}! [n_2]_{\mu_2}^{1/2}!}
\end{align}
where each $v_\lambda^c(z,\mu,\epsilon)$ admits an expression in terms of special functions as before. The monomials can be cast in the coupled basis by the inverse expansion of \eqref{CGDecomp}, 
\begin{align}
 \frac{z_1^{n_1}z_2^{n_2}}{[n_1]_{\mu_1}^{1/2}! [n_2]_{\mu_2}^{1/2}!} = \sum_{N+j=n_1+n_2} C_{n_1,n_2}^{N,j} e_N^{(\mu_{12},\epsilon_{12})}(z_1,z_2),
\end{align}
to obtain 
\begin{align*}
 v_{\lambda_1}^c(z_1,\mu_1,\epsilon_1)v_{\lambda_2}^{\lambda_1}(z_2,\mu_2,\epsilon_2) = \sum_{n_1, n_2=0}^\infty P_{n_1}(\lambda_1;\mu_1,c\epsilon_1) P_{n_2}(\lambda_2;\mu_2,\lambda_1\epsilon_2) \sum_{N+j=n_1+n_2} C_{n_1,n_2}^{N,j} e_N^{(\mu_{12},\epsilon_{12})}(z_1,z_2).
\end{align*}
Inverting the order of summation, one gets
\begin{align*}
 v_{\lambda_1}^c(z_1,\mu_1,\epsilon_1)v_{\lambda_2}^{\lambda_1}(z_2,\mu_2,\epsilon_2) = \sum_{N, j=0}^\infty e_N^{(\mu_{12},\epsilon_{12})}(z_1,z_2) \sum_{n_1+n_2=N+j} C_{n_1,n_2}^{N,j} P_{n_1}(\lambda_1;\mu_1,c\epsilon_1) P_{n_2}(\lambda_2;\mu_2,\lambda_1\epsilon_2)
\end{align*}
where the second sum now corresponds directly to the first convolution identity \eqref{conv_id_CG}. Using this gives 
\begin{align} \label{gendev}
\begin{aligned}
 v_{\lambda_1}^c(z_1,\mu_1,\epsilon_1)v_{\lambda_2}^{\lambda_1}(z_2,\mu_2,\epsilon_2) = &\sum_{j=0}^\infty K_j(\lambda_2;\mu_2,\mu_1;c) J_j\left( \tfrac{\epsilon_2\lambda_1}{\lambda_2};\ 2\mu_2, 2\mu_1, -\tfrac{c\epsilon_1\epsilon_2}{\lambda_2} \right) \\ &\times \sum_{N=0}^\infty P_N(\lambda_2;\mu_{12},c\epsilon_{12}) e_N^{(\mu_{12},\epsilon_{12})}(z_1,z_2).
\end{aligned}
\end{align}
If one obtains an expression for the second sum in terms of special functions, then a generating function for the Big \m1 Jacobi polynomials follows immediately. We first look for an explicit realization of the coupled basis vectors $e_N^{(\mu_{12},\epsilon_{12})}(z_1,z_2)$. Consider its expansion in terms of the uncoupled basis given in \eqref{CGDecomp} in the realization \eqref{realn} and substitute $n_2= N+j-n_1$ :   
\begin{align}
 e_N^{(\mu_{12},\epsilon_{12})}(z_1,z_2) = \sum_{n_1=0}^{N+j} C_{n_1,N+j-n_1}^{N,j} \frac{(\frac{z_1}{z_2})^{n_1}}{[n_1]_{\mu_1}^{1/2}! [N+j-n_1]_{\mu_2}^{1/2}!} ~z_2^{N+j}.
\end{align}
The sum on the RHS corresponds to the generating function for the $\mathfrak{osp}(1|2)$ Clebsch-Gordan coefficients \cite{Bergeron2016}. Taking into account the choice of normalization and phase factor made in section 3, this gives 
\begin{align} \label{eNreal}
 e_N^{(\mu_{12},\epsilon_{12})}(z_1,z_2) = 
\begin{cases}
  \dfrac{\left(z_1^2+z_2^2\right)^{N/2}}{[N]_{\mu_{12}}^{1/2}!} f_e(j) \qquad \text{if $N$ even,} \\[1.5em]
  \dfrac{\left(z_1^2+z_2^2\right)^{N/2}}{[N]_{\mu_{12}}^{1/2}!} f_o(j) \qquad \text{if $N$ odd}
\end{cases}
\end{align}
where $f_e(j)$ and $f_o(j)$ are functions of $j$. Let $j=2j_e+j_p$ with $j_p \in \{0,1\}$ and $j_e \in \mathbb{N}$, then  
\begin{align} \label{fe}
 f_e&(j) = (-1)^{j_e+j_p}\frac{z_2^j}{[j]_{\mu_2}^{1/2}!} \left[ \frac{(\frac12 +\mu_1)_{j_e+j_p}}{(j_e+j_p+1+\mu_1+\mu_2)_{j_e+j_p}} \right]^{1/2} \\[0.5em]
 \notag    &\times \left( \pFq{2}{1}{-j_e, \tfrac12-j_e-j_p-\mu_2}{\tfrac12+\mu_1}{-(\tfrac{z_1}{z_2})^2} + \frac{(-1)^{j_p}z_1(j+2\mu_2 j_p)}{z_2\epsilon_2(1+2\mu_1)} \pFq{2}{1}{1-j_e-j_p, \tfrac12-j_e-\mu_2}{\tfrac32+\mu_1}{-(\tfrac{z_1}{z_2})^2} \right)
\end{align}
and
\begin{align} \label{fo}
 f_o&(j) = (-1)^{j_e+j_p}\left(\frac{z_1^2}{z_2^2}+1\right)^{-1/2} \frac{z_2^j}{[j]_{\mu_2}^{1/2}!} \left[ \frac{(\frac12 +\mu_1)_{j_e+j_p}}{(j_e+j_p+1+\mu_1+\mu_2)_{j_e+j_p}} \right]^{1/2} \\[0.5em]
 \notag    &\times \left( \pFq{2}{1}{-j_e-j_p, -\tfrac12-j_e-\mu_2}{\tfrac12+\mu_1}{-(\tfrac{z_1}{z_2})^2} + \tfrac{(-1)^{j_p} z_1(j+1+2\mu_1+2\mu_2 j_p)}{z_2\epsilon_2(1+2\mu_1)} \pFq{2}{1}{-j_e, \tfrac12\!-\!j_e\!-\!j_p\!-\!\mu_2}{\tfrac32+\mu_1}{-(\tfrac{z_1}{z_2})^2} \right).
\end{align}
Separating the sum over $N$ according to parities in \eqref{gendev} and substituting \eqref{eNreal} gives
\begin{align*} 
\begin{aligned}
 v_{\lambda_1}^c(z_1,\ &\mu_1,\epsilon_1)v_{\lambda_2}^{\lambda_1}(z_2,\mu_2,\epsilon_2) \\
 &= \sum_{j=0}^\infty K_j(\lambda_2;\mu_2,\mu_1;c) J_j\left( \tfrac{\epsilon_2\lambda_1}{\lambda_2};\ 2\mu_2, 2\mu_1, -\tfrac{c\epsilon_1\epsilon_2}{\lambda_2} \right) \\
 &\times \left( f_e(j) \sum_{k=0}^\infty P_{2k}(\lambda_2;\mu_{12},c\epsilon_{12}) \dfrac{\left(z_1^2+z_2^2\right)^{\frac{2k}{2}}}{[2k]_{\mu_{12}}^{1/2}!} 
  + f_o(j) \sum_{k=0}^\infty P_{2k+1}(\lambda_2;\mu_{12},c\epsilon_{12}) \dfrac{\left(z_1^2+z_2^2\right)^{\frac{2k+1}{2}}}{[2k+1]_{\mu_{12}}^{1/2}!} \right).
\end{aligned}
\end{align*}
The two sums over $k$ have precisely the form of the sums appearing in equation \eqref{splitpar}. It is thus possible to reexpress both of them in terms of an hypergeometric function by using the generating function of the Laguerre polynomials. This yields 
\begin{align*} 
 v_{\lambda_1}^c(z_1, &\mu_1,\epsilon_1)v_{\lambda_2}^{\lambda_1}(z_2,\mu_2,\epsilon_2)\\
 &= \exp\left(-\frac{z_1^2+z_2^2}{2}\right) \sum_{j=0}^\infty K_j(\lambda_2;\mu_2,\mu_1;c) J_j\left( \tfrac{\epsilon_2\lambda_1}{\lambda_2};\ 2\mu_2, 2\mu_1, -\tfrac{c\epsilon_1\epsilon_2}{\lambda_2} \right) \\[0.5em]
 &\times \left( f_e(j)  \pFq{0}{1}{-}{\mu_{12}+\tfrac12}{\tfrac{(z_1^2+z_2^2)(\lambda_2^2-c^2)}{4}} 
 + f_o(j) \frac{(z_1^2+z_2^2)^\frac12 (\lambda_2-c\epsilon_{12})}{2\mu_{12}+1}  \pFq{0}{1}{-}{\mu_{12}+\tfrac32}{\tfrac{(z_1^2+z_2^2)(\lambda_2^2-c^2)}{4}}  \right).
\end{align*}
Using proposition \ref{prop6} to express both eigenvectors on the LHS in terms of special functions, the previous equation becomes a generating function for the Big -1 Jacobi polynomials.

\begin{proposition}
 The Big \m1 Jacobi polynomials satisfy the bilinear generating function
\begin{align*} 
&\left( \pFq{0}{1}{-}{\mu_1+\tfrac12}{\frac{z_1^2 (\lambda_1^2 - c^2)}{4}}  +   \frac{z_1(\lambda_1-c\epsilon_1)}{2\mu+1} \pFq{0}{1}{-}{\mu_1+\tfrac32}{\frac{z_1^2 (\lambda_1^2 - c^2)}{4}} \right) \\[1em]
& \times \left( \pFq{0}{1}{-}{\mu_2+\tfrac12}{\frac{z_2^2 (\lambda_2^2 - \lambda_1^2)}{4}}  +   \frac{z_2(\lambda_2-\lambda_1\epsilon_2)}{2\mu_2+1} \pFq{0}{1}{-}{\mu_2+\tfrac32}{\frac{z_2^2 (\lambda_2^2 - \lambda_1^2)}{4}} \right) \\[1em]
&= \sum_{j=0}^\infty K_j(\lambda_2;\mu_2,\mu_1;c) J_j\left( \tfrac{\epsilon_2\lambda_1}{\lambda_2};\ 2\mu_2, 2\mu_1, -\tfrac{c\epsilon_1\epsilon_2}{\lambda_2} \right) \\[0.5em]
 &\times \left( f_e(j)  \pFq{0}{1}{-}{\mu_{12}+\tfrac12}{\tfrac{(z_1^2+z_2^2)(\lambda_2^2-c^2)}{4}} + f_o(j) \frac{(z_1^2+z_2^2)^\frac12 (\lambda_2-c\epsilon_{12})}{2\mu_{12}+1}  \pFq{0}{1}{-}{\mu_{12}+\tfrac32}{\tfrac{(z_1^2+z_2^2)(\lambda_2^2-c^2)}{4}}  \right)
\end{align*}
where $\mu_{12} = \mu_1+\mu_2+\tfrac12+j$ and $f_e(j)$, $f_o(j)$, $K_j(\lambda_2;\mu_1,\mu_2;c)$ are given by the formulas \eqref{fe}, \eqref{fo}, \eqref{KN}. 
\end{proposition}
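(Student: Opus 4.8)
The plan is to evaluate the bilinear product of generalized eigenvectors $v_{\lambda_1}^c(z_1,\mu_1,\epsilon_1)\,v_{\lambda_2}^{\lambda_1}(z_2,\mu_2,\epsilon_2)$ in the Dunkl realization \eqref{realn} in two complementary ways and to equate the outcomes. On the left, Proposition \ref{prop6} expresses each factor as an exponential times a pair of ${}_0F_1$ functions; multiplying the two expansions and combining the Gaussian prefactors $e^{-z_1^2/2}e^{-z_2^2/2}$ yields exactly the left-hand side of the asserted identity. The entire content of the statement is therefore the claim that this \emph{same} product admits the coupled-basis expansion displayed on the right.

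To produce that expansion, I would first write the product as the double sum over $n_1,n_2$ of $P_{n_1}(\lambda_1;\mu_1,c\epsilon_1)P_{n_2}(\lambda_2;\mu_2,\lambda_1\epsilon_2)\,z_1^{n_1}z_2^{n_2}/([n_1]_{\mu_1}^{1/2}!\,[n_2]_{\mu_2}^{1/2}!)$, then replace the monomials using the inverse Clebsch--Gordan expansion of \eqref{CGDecomp}. Interchanging the order of summation, the inner sum over $n_1+n_2=N+j$ is precisely the first convolution identity, Proposition \ref{convid1}, so it collapses to $K_j\,J_j\,P_N(\lambda_2;\mu_{12},c\epsilon_{12})$; this is equation \eqref{gendev}. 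What then remains is to resum $\sum_N P_N(\lambda_2;\mu_{12},c\epsilon_{12})\,e_N^{(\mu_{12},\epsilon_{12})}(z_1,z_2)$ over $N$.

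The crucial step is obtaining a closed form for the coupled basis vectors $e_N^{(\mu_{12},\epsilon_{12})}(z_1,z_2)$ in this realization. I would feed the explicit Clebsch--Gordan coefficients into their defining expansion, recognize the resulting finite sum as the $\mathfrak{osp}(1|2)$ Clebsch--Gordan generating function of \cite{Bergeron2016}, and thereby extract the form \eqref{eNreal}, whose key feature is that the $z$-dependence factorizes as $(z_1^2+z_2^2)^{N/2}/[N]_{\mu_{12}}^{1/2}!$ times a $j$-dependent coefficient $f_e(j)$ or $f_o(j)$ according to the parity of $N$. Splitting the sum over $N$ into even and odd terms and substituting this form puts the two subsums into the shape already met in \eqref{splitpar}; the Laguerre generating function then turns each into a ${}_0F_1$ with argument $(z_1^2+z_2^2)(\lambda_2^2-c^2)/4$, and these assemble into the right-hand side.

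The main obstacle is the explicit computation of $f_e(j)$ and $f_o(j)$ in \eqref{fe}--\eqref{fo}: it demands careful bookkeeping of the phase factor $\phi$ and of the normalization fixed in section 3, so that the monomial sum genuinely closes up to the clean factor $(z_1^2+z_2^2)^{N/2}$. Everything downstream is routine once that factorization is secured, since it is exactly what permits the final Laguerre resummation to mirror the single-variable computation of Proposition \ref{prop6}.
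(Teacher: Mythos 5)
Your proposal is correct and follows essentially the same route as the paper: expand the product of eigenvectors in the Dunkl realization, pass to the coupled basis via the inverse Clebsch--Gordan expansion so that the inner sum collapses through Proposition \ref{convid1} to give \eqref{gendev}, realize $e_N^{(\mu_{12},\epsilon_{12})}(z_1,z_2)$ via the Clebsch--Gordan generating function of \cite{Bergeron2016} to get \eqref{eNreal}, and resum over $N$ by parity with the Laguerre generating function exactly as in Proposition \ref{prop6}. You also correctly identify the only genuinely delicate point, namely the bookkeeping of phases and normalizations needed to establish the factorized form \eqref{eNreal} with the coefficients \eqref{fe}--\eqref{fo}.
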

\begin{proof}
 The result follows from the analysis provided before the statement of this proposition.
\end{proof}

\section{Conclusion}

We considered the discrete series representations of the superalgebra $\mathfrak{osp}(1|2)$ and singled out a self-adjoint element $X_c$. We constructed the generalized eigenvectors of this special element in the representation spaces and in their two- and three-fold tensor products. Looking at different bases and their overlaps led to our main results : propositions \ref{convid1} and \ref{convid2} which provide convolution identities for \m1 orthogonal polynomials and also connection coefficients for two-variable Dunkl polynomials orthogonal with respect to the same measure. This was further used to obtain a bilinear generating function for the Big \m1 Jacobi polynomials. This led to interpretations and connections between the Specialized Chihara, the dual \m1 Hahn, the Big \m1 Jacobi and the Bannai-Ito polynomials.       

This study suggests a number of future research questions. A natural extension would be to look at higher dimensional spaces via the $n$-fold tensor product of representations. This was done for $\mathfrak{su}(1,1)$ in \cite{Rosengren1999}. This should lead to new convolution identities and to multivariate Dunkl orthogonal polynomials of Tratnik type. In fact, it is straightforward to extend the unitary operators from propositions \ref{prop1}, \ref{prop2} and \ref{prop4} to an arbitrary $n$-fold tensor product; the main difficulty is to find interesting bases and overlaps. It should be noted however that some investigations on this last point have already been done \cite{DeBie2017,DeBie2019}. Another avenue to explore would be how the convolution identities obtained here could be used to derive different generating functions and Poisson kernels. Interesting constructions pointing in this direction have been presented in \cite{VanderJeugt1998} and \cite{Koelink1999} for the Lie algebras $\mathfrak{su}(1,1)$ and $U_q(\mathfrak{su}(1,1))$. Finally, a broader project would be to revisit the construction with different representations and realizations. We mention as examples \cite{Groenevelt2002} and \cite{Groenevelt2004} where similar questions are considered.

\section*{Acknowledgments}
EK thanks the Centre de Recherches Math\'ematiques for its hospitality during a visit when part of this investigation has been done. JML holds an Alexander-Graham-Bell PhD fellowship from the Natural Science and Engineering Research Council (NSERC) of Canada. LV is grateful to NSERC for support through a discovery grant. 


\bibliography{osp_convolution.bib}
\bibliographystyle{elsarticle-num}

\end{document}